\documentclass{article}



\usepackage{natbib}
\usepackage{fullpage}




\usepackage[utf8]{inputenc} 
\usepackage[T1]{fontenc}    
\usepackage{hyperref}       
\usepackage{url}            
\usepackage{booktabs}       
\usepackage{amsfonts}       
\usepackage{nicefrac}       
\usepackage{microtype}      
\usepackage{xcolor}         
\usepackage{graphicx}
\usepackage{dsfont}
\usepackage{vwcol}
\usepackage{changepage}

\usepackage{mdframed}

\usepackage{shortcuts_OPT,enumitem,url,amsmath,amssymb,amsthm}

\newlength\figH
\newlength\figW
\setlength{\figH}{4cm}
\setlength{\figW}{8cm}

\usepackage[utf8]{inputenc}
\usepackage{pgfplots}
\DeclareUnicodeCharacter{2212}{−}
\usepgfplotslibrary{groupplots,dateplot}
\usetikzlibrary{patterns,shapes.arrows}
\pgfplotsset{compat=newest}



\usepackage{psfrag,subfigure,float,hyperref,bbm,cleveref}
\hypersetup{
  colorlinks   = true, 
  urlcolor     = blue, 
  linkcolor    = blue, 
  citecolor    = blue   
}

\makeatletter
\def\Exa@space@setup{%
  \Exa@preskip=5cm plus 2cm minus 2cm
  \Exa@postskip=\Exa@preskip 
}
\makeatother

\newtheoremstyle{exampstyle}
  {1.5\topsep} 
  {1.5\topsep} 
  {} 
  {} 
  {\bfseries} 
  {.} 
  {.5em} 
  {} 

\theoremstyle{exampstyle}
\newtheorem{Exa}{Example}

\newtheoremstyle{otherstyle}
  {1.5\topsep} 
  {1.5\topsep} 
  {\itshape} 
  {} 
  {\bfseries} 
  {.} 
  {.5em} 
  {} 

\theoremstyle{otherstyle}

\newtheorem{Lemma}{Lemma}

\newtheorem{Theorem}{Theorem}

\newtheorem{Corollary}{Corollary}

\newtheorem{assumption}{Assumption}

\newenvironment{manualtheorem}[1]{%
  \manualtheoreminner
}{\endmanualtheoreminner}


\title{State Dependent Performative Prediction with Stochastic Approximation}
\author{Qiang Li, Hoi-To Wai\thanks{Q.~Li and H.-T.~Wai are with the Department of Systems Engineering and Engineering Management, The Chinese University of Hong Kong, Hong Kong SAR of China. Emails: \texttt{\{liqiang, htwai\}@se.cuhk.edu.hk}}}
\date{\today}
\begin{document}

\maketitle

\begin{abstract}
	This paper studies the performative prediction problem which optimizes a stochastic loss function with data distribution that depends on the decision variable. We consider a setting where the agent(s) provides samples adapted to the learner's and agent's previous states. The said samples are used by the learner to optimize a loss function. This closed loop algorithm is studied as a state-dependent stochastic approximation (SA) algorithm, where we show that it finds a fixed point known as the \emph{performative stable} solution. Our setting models the unforgetful nature and the reliance on past experiences of agent(s). Our contributions are three-fold. First, we demonstrate that the SA algorithm can be modeled with \emph{biased} stochastic gradients driven by a controlled Markov chain (MC) whose transition probability is adapted to the learner's state. Second, we present a novel finite-time performance analysis of the state-dependent SA algorithm. We show that the expected squared distance to the performative stable solution decreases as ${\cal O}(1/k)$, where $k$ is the iteration number. Third, numerical experiments are conducted to verify our findings.
\end{abstract}

\section{Introduction}
Many supervised learning algorithms are built around the assumption that learners can obtain samples from a static distribution \emph{independent} of the state of the learner and/or the agent who provides the sample. This assumption is reasonable for static tasks such as image classification. Oftentimes, it simplifies the design and analysis of algorithms such as stochastic gradient methods.

On the other hand, in certain applications agents can be \emph{performative} where the samples are drawn from a \emph{decision-dependent} distribution. This is relevant to the framework of strategic classification \citep{hardt2016strategic, cai2015optimum,kleinberg2020classifiers}. For instance, when training a classifier for loan applications, given the classifier published by the \emph{learner} (bank), as the best response strategy, the \emph{agent(s)} (loan applicants) may manipulate their profile prior to the submission, e.g., by spending more with credit cards, making unnatural transactions, etc., in order to increase their chance of successful application. The latter manipulation effectively shifts the data distribution and may affect the convergence properties, or even the stability of learning algorithms.

Earlier works  \citep{bartlett1992learning,quinonero2009dataset} studied the effects of exogenous changes with shifts in data distribution. 
Recently, \cite{perdomo2020performative} considered the convergence properties of learning algorithms when they are agnostic to the shifted distributions. Specifically, the \emph{learner} is interested in the following \emph{performative prediction} problem:
\beq \label{eq:performative} 
\min_{ \theta \in \RR^d } ~ V(\theta) = \EE_{ z \sim {\cal D}(\theta) } \big[ \ell( \theta; z ) \big],
\eeq
where 
$\ell( \theta; z )$ is the loss function given the sample $z \in {\sf Z}$. The loss function is strongly-convex with respect to (w.r.t.) the parameter $\theta \in \RR^d$, and the gradient map $\grd_\theta \ell( \theta; z )$ is Lipschitz continuous w.r.t.~$z$, $\theta$. In addition, the distribution ${\cal D}(\theta)$ on ${\sf Z}$ is parameterized by the decision vector $\theta$, which captures the distribution shift due to the learner's state. In other words, problem \eqref{eq:performative} finds a parameter $\theta$ which minimizes the expected loss that takes care of the decision-dependent distribution.

Despite the strong convexity of $\ell(\theta ;z)$, problem \eqref{eq:performative} is non-convex in general due to the coupling with $\theta$ in the data distribution ${\cal D}(\theta)$. As a remedy, \cite{perdomo2020performative} studied population-based algorithms that converge to a \emph{performative stable} point, $\theta_{PS}$, which is a fixed point to the system $\theta = \argmin_{ \theta' \in \RR^d } \EE_{ z \sim {\cal D}(\theta) } \big[ \ell( \theta'; z ) \big]$. 
Along the same line, \cite{mendler2020stochastic} analyzed stochastic algorithms which deploy minibatches of i.i.d.~samples from the shifted distribution at each iteration, \cite{izzo2021learn, miller2021outside} studied gradient estimation techniques and developed algorithms that converge to an optimal solution of \eqref{eq:performative} through introducing a gradient correction term (also see \citep{munro2020learning} which has considered a related setting), \cite{drusvyatskiy2020stochastic} studied the stability of proximal stochastic gradient algorithms (and their variants) in the performative prediction setting, \cite{brown2020performative} studied population-based algorithms where the state-dependent distribution is updated iteratively. 

This paper studies the convergence of stochastic algorithms where only one sample (or a minibatch of samples) is required at each iteration. Specifically, we consider a greedy deployment scheme similar to \cite{mendler2020stochastic} where the learner deploys the most recent model after each update round. Moreover, the {agent}(s) is modeled with a memory property such that the update of his or her state depends on the past state. The closed-loop algorithm can be studied as a \emph{state-dependent stochastic approximation (SA)} algorithm. In contrast to the setting analyzed in \cite{mendler2020stochastic,drusvyatskiy2020stochastic} where only the learner's state is incrementally updated and the agent draws i.i.d.~samples from the distribution shifted by the learner's state, the agent's state evolves according to a \emph{controlled Markov chain (MC)} whose stationary distribution is the shifted distribution. 

Our study is motivated by the \emph{stateful (or unforgetful) nature} of the agents who depend on past experiences when adapting to a shifted target data distribution. For example, a loan applicant may take months to build up his/her credit history to adapt to changes in the published classifier. Several questions naturally arise from such dynamical performative prediction problems: \emph{will the stochastic algorithm converge to a performative stable point similar to \cite{mendler2020stochastic, drusvyatskiy2020stochastic}? what is the sample complexity?} This paper addresses these questions as we make the following contributions: 
\begin{itemize}[leftmargin=6mm]
	\item We develop a \emph{fully state-dependent performative prediction} framework which extends the analysis in \citep{mendler2020stochastic, drusvyatskiy2020stochastic}. The proposed extension relies on a state-dependent stochastic approximation (SA) algorithm with noise originating from a controlled Markov chain [cf.~Algorithm 1]. 
	\item Our main result consists of a finite-time convergence analysis of the state-dependent SA algorithm under a setting which \emph{does not assume} the iterates to be bounded a-priori. Previous works either assumed the latter condition a-priori (e.g., \cite{benveniste2012adaptive}), or they require a compact constraint set (e.g., \cite{atchade2017perturbed}). Using a novel analysis, we show that the mean squared error between the SA iterates and the unique performative stable solution [cf.~\eqref{eq:ps}] converges at a rate of ${\cal O}(1/k)$ in expectation. Additionally, we discuss about the convergence to an approximate stationary point of \eqref{eq:performative} when the loss function $\ell(\theta;z)$ is not strongly convex (possibly non-convex). 
	\item We demonstrate the efficacy of the SA algorithm with several experiments. We show that it has a comparable performance as in \cite{mendler2020stochastic} which assumes an ideal setting with i.i.d.~samples taken from the shifted distribution.
\end{itemize}
The rest of this paper is organized as follows. \S\ref{sec:main} formally describes the performative prediction problem and a state-dependent SA algorithm for tackling the problem, \S\ref{sec:result} presents the main theoretical results for the convergence of the state dependent SA algorithm, \S\ref{sec:pf} gives an overview of the proof strategy, and \S\ref{sec:num} presents the numerical experiments.

\paragraph{Related Works} Analysis for state-dependent stochastic approximation (SA) algorithms with controlled MC, which extend over the classical SA \citep{robbins1951stochastic}, has been considered in a number of works.  \citet{benveniste2012adaptive,kushner2003stochastic} studied the asymptotic convergence of such algorithms, also see \citep{tadic2017asymptotic} which analyzed the case of biased SA. 

Recent works have analyzed the finite-time performance of state-dependent SA algorithms that are related to ours. \citet{atchade2017perturbed} considered a proximal SA algorithm where the proximal function has a compact domain; \citet{karimi2019non} analyzed the plain SA algorithm without projection but assumed that the updates are bounded; \citet{sun2018markov, doan2020finite} studied SA algorithms with a static MC not suitable for performative prediction.
Our analysis relaxes the restrictions of these prior works and focuses on convergence to a performative stable solution unique to \eqref{eq:performative} [cf.~see \eqref{eq:ps}].

Lastly, our analysis technique is related to the recent endeavors on obtaining finite time bounds for reinforcement learning (RL) algorithms. Notice that  \eqref{eq:performative} can in fact be regarded as a special case of policy optimization \citep{sutton2018reinforcement}. To this end, recent works \citep{wu2020finite,xu2020non,zhang2020provably}  studied the sample complexity of actor critic algorithms in finding a stationary point of an average reward function, where  controlled MCs are considered. In comparison to our analysis, the latter works have only studied the convergence to a stationary point of a simple cost function.

\paragraph{Notations} We denote ${\sf Z}$ as the state space of samples provided by the agent(s) and ${\cal Z}$ is a $\sigma$-algebra on ${\sf Z}$. A Markov transition kernel is a map given by $\MK : {\sf Z} \times {\cal Z} \rightarrow \RR_+$. At the state $z \in {\sf Z}$, the next state is drawn as $z' \sim \MK( z, \cdot )$. It holds for any measurable function $f : {\sf Z} \rightarrow \RR$ that $\EE[ f(z') | z ] = \int_{\sf Z} f(z') \MK( z, {\rm d}\!~z') =: \MK f(z)$. Unless otherwise specified, the operator $\grd$ takes the gradient of a function w.r.t.~the first argument for a multivariate function, e.g., $\grd \ell( \theta; z )$ denotes the gradient of $\ell(\theta; z)$ taken w.r.t.~$\theta$. For $x, y \in \RR^d$, we denote the inner product as $\pscal{x}{y} = x^\top y$.

\section{State-dependent Performative Prediction} \label{sec:main}
Due to the intractability of Problem~\eqref{eq:performative}, we are interested in evaluating the \emph{performative stable} (PS) solution:
\beq \label{eq:ps}
\theta_{PS} = \argmin_{ \theta \in \RR^d } ~\EE_{ z \sim {\cal D}(\theta_{PS})} [ \ell( \theta; z ) ]. 
\eeq
Note that the expectation is taken with respect to $z \sim {\cal D}( \theta_{PS})$. It is known that $\theta_{PS}$ is in general different from an optimal or stationary solution to \eqref{eq:performative}; see \citep[Example 2.2]{perdomo2020performative}. Instead, $\theta_{PS}$ is a fixed point solution to the procedure when the learner repeatedly update the parameter $\theta$ with the drifted data distribution provided by the agent.

We consider a \emph{state dependent} stochastic approximation (SA) algorithm motivated by the stateful nature of agents.
The latter is modeled using a controlled Markov chain. 
For any $\theta$, we define a Markov transition kernel $\MK_{\theta}$ which induces a Markov chain with a unique stationary distribution $\pi_{\theta}(\cdot)$ such that $\EE_{z \sim \pi_{\theta}(\cdot)} [ \grd \ell(\theta; z) ] = \EE_{z' \sim {\cal D}(\theta)} [ \grd \ell( \theta ; z' )]$.
The \emph{learner} and \emph{agent} interact through the following rules:

\begin{center}
\begin{minipage}[c]{0.55\linewidth} 
\begin{center}
    \fbox{\begin{minipage}{.975\linewidth} 
    \underline{\textbf{Algorithm 1: State-dependent SA}}\vspace{.1cm}
    
    \textbf{Input}: initialization $\theta_0$, step sizes $\{ \gamma_k \}_{k \geq 0}$.\vspace{.1cm}
    
    \textbf{For} $k=0,1,2, \ldots$
    \begin{adjustwidth}{1.5em}{0pt}
    \emph{Agent} draws $z_{k+1} = {\sf SAMPLE}( \theta_k, z_k )$.\vspace{.1cm}
    
    \emph{Learner} updates\vspace{-.2cm}
    \beq \label{eq:sa}
	\theta_{k+1}=\theta_k-\gamma_{k+1} \grd \ell(\theta_{k}; z_{k+1}),\vspace{-.2cm}
    \eeq
    and deploys $\theta_{k+1}$.
    \end{adjustwidth}
    \end{minipage}}
\end{center}
\end{minipage}~~~
\begin{minipage}[c]{0.375\linewidth} 
\begin{center}
    \fbox{\begin{minipage}{.975\linewidth}
    \underline{${\sf SAMPLE}(\theta , z )$}:\vspace{.1cm}
    \begin{adjustwidth}{.5em}{0pt}
    Draw the next sample as \vspace{-.1cm}
    \beq \label{eq:mkv_setting}
    z' \sim \MK_{ \theta } ( z, \cdot ),\vspace{-.1cm}
    \eeq
    where $\MK_{\theta} : {\sf Z} \times {\cal Z} \rightarrow \RR_+$ is a Markov transition kernel.\vspace{.1cm}
    
    \textbf{Output}: $z' \in {\sf Z}$.
    \end{adjustwidth}
    \end{minipage}
    }
\end{center}
\end{minipage}
\end{center}

We observe that \eqref{eq:sa} is a standard SA recursion based on $\grd \ell(\theta_{k}; z_{k+1})$, where the \emph{learner} deploys the most recent model $\theta_k$ and takes the sample $z_{k+1}$ directly from the agent. 
Specifically, we consider a \emph{state-dependent} setting where the sampling of $z_{k+1}$ can be affected by both the \emph{learner}'s and \emph{agent}'s states. Formally, the state-dependency is captured by modeling the samples sequence $\{z_k\}_{k \geq 1}$ as a controlled MC in \eqref{eq:mkv_setting}. 
Notice that the stationary distribution $\pi_{\theta}(\cdot)$ does not need to be the same as ${\cal D}(\theta)$ as long as the former yields an unbiased gradient. However, for simplicity, we assume $\pi_{\theta}(\cdot) \equiv {\cal D}(\theta)$ for any $\theta \in \RR^d$. 

The {greedy deployment} scheme studied by \cite{mendler2020stochastic} assumed that the agents draw $z_{k+1}  {\sim} {\cal D}( \theta_k )$ as independent samples. The latter implies that $\grd \ell( \theta_k; z_{k+1})$ is an unbiased estimator of $\EE_{z' \sim {\cal D}(\theta_k)} [ \grd \ell( \theta_k; z' )]$. As a significant departure, in Algorithm~1, the stochastic gradient $\grd \ell( \theta_k; z_{k+1})$ is a \emph{biased estimator} for $\EE_{z' \sim {\cal D}(\theta_k)} [ \grd \ell( \theta_k; z' )]$. 
For any $\theta \in \RR^d$, we observe
\beq 
\begin{split}
& \EE[ \grd \ell ( \theta; z_{k+1} ) \!~|\!~ z_k ] = \MK_{\theta_k} \grd \ell( \theta; z_k ) = \int_{\sf Z} \grd \ell( \theta; z ) \MK_{\theta_k} ( z_k, {\rm d} \!~ z ) .
\end{split}
\eeq
Since $\MK_{\theta_k} ( z_k, \cdot ) \neq \pi_{\theta_k}(\cdot)$, we have $\EE[ \grd \ell ( \theta; z_{k+1} ) \!~|\!~ z_k ] \neq \EE_{z' \sim {\cal D}(\theta_k)} [ \grd \ell( \theta_k; z' )]$.
Under the setting \eqref{eq:mkv_setting} with restricted access to the shifted data distribution ${\cal D}(\theta_k)$, one possibility to obtain an \emph{unbiased} gradient estimate 
is to hold $\theta_k$ as fixed and repeat the sampling process $z' = {\sf SAMPLE}(\theta_k , z)$ indefinitely. In this case, we have the unbiased estimate $\lim_{ n \rightarrow \infty } \MK_{\theta_k}^n {\grd \ell}( \theta ; z ) = \EE_{z' \sim {\cal D}(\theta_k)} [ \grd \ell(\theta; z') ]$ for any initial agent state $z \in {\sf Z}$.   
Instead of searching for an unbiased gradient estimator through repeated MC updates, the state-dependent SA algorithm uses the {instantaneous} samples from \eqref{eq:mkv_setting} that are \emph{co-evolving} with the learner's iterate in \eqref{eq:sa}. 



\subsection{Example of Controlled Markov Chain \eqref{eq:mkv_setting}} \label{sec:exmkv}
The model \eqref{eq:mkv_setting} captures the stateful and stochastic nature of the agent as the sample $z_{k+1}$ depends on the previous one $z_k$. Concretely, our study is motivated by the following application example which satisfies $\pi_{\theta}(\cdot) \equiv {\cal D}(\theta)$ (a case where $\pi_{\theta}(\cdot) \neq {\cal D}(\theta)$ will be discussed in Appendix~\ref{app:gau}):
    
\begin{Exa}[Strategic Classification with Adapted Best Response] \label{ex:br} We consider the problem of strategic classification \citep{hardt2016strategic} involving some \emph{agents} and a \emph{learner}. In a typical scenario, the agent provides the best-response (i.e., optimized) samples upon knowing the current learner's state $\theta_k$. Ideally, the sample $z_{k+1} \sim {\cal D}(\theta_k)$ shall be drawn as 
	\beq \label{eq:br}  
	z_{k+1} \in \argmax_{ z' \in {\sf Z} }~U( z'; \tilde{z}_{k+1} , \theta_k ),~~\tilde{z}_{k+1} \sim {\cal D}_0,
	\eeq
where ${\cal D}_0$ is the base distribution and $U( z'; z, \theta )$ is strongly concave in $z'$ for any $(z,\theta)$. The best response $\max_{ z' \in {\sf Z} }U( z'; z , \theta )$ perturbs the base sample $z$ in favor of the agent.



In practice, the exact maximization in \eqref{eq:br} is not obtained unless the agent(s) are given sufficient time to respond to the learner's state. Instead, we consider a setting where the agent(s) improve their responses via a gradient ascent dynamics evolving simultaneously with the learner. 

Concretely, consider a setting where the learning problem \eqref{eq:performative} utilizes data provided by $m$ agents. 
Let ${\cal D}_0$ be the empirical distribution of $m$ data points $\{ \bar{d}_1,..., \bar{d}_m \}$, where $\bar{d}_i \in {\sf Z}$ is the initial data held by agent $i$.
At iteration $k$, a subset of agents ${\cal I}_k \subset \{1,...,m\}$ (selected uniformly with $|{\cal I}_k| = pm$, $p \in (0,1]$) becomes aware of the learner's state and they search for the best response through a gradient descent update. Then, the learner selects uniformly an agent $i_k \in \{1,...,m\}$ and requests the data sample $z_{k+1}$ from him/her. 
In summary, the inexact best response dynamics follows:
\beq \label{eq:ibr}
\textbf{Step 1:}~~d_i^{k+1} = 
\begin{cases} 
	d_i^k + \alpha \grd U( d_i^k; \bar{d}_i, \theta_k ), & i \in {\cal I}_k, \\
	d_i^k, & i \notin {\cal I}_k,
\end{cases} 
\qquad \textbf{Step 2:}~~z_{k+1} = d_{i_k}^{k+1} ,
\eeq
where the gradient is taken w.r.t.~the first argument of $U( \cdot )$, $\alpha > 0$ is the agents' response rate (stepsize). 
For the initialization, we set $d_i^0 = \bar{d}_i$ for all $i=1,...,m$. 
The above dynamics highlights the stateful nature of the agents as they improve the responses based on their past experiences.

The best response dynamics executed by the agent(s) in \eqref{eq:ibr} leads naturally to a controlled MC \eqref{eq:mkv_setting}. Specifically, the MC's state is given by the tuple $\hat{z}_k = \{ d_1^k, ..., d_m^k , z_k \}$ and the application of the Markov kernel $\MK_{\theta_k}$ to $\hat{z}_k$ yields the inexact best response dynamics \eqref{eq:ibr}. Furthermore, the latter admits a stationary distribution where 
$\lim_{n \rightarrow \infty} \MK_{\theta_k}^n \grd \ell( \theta; \hat{z} ) = \EE_{ z \sim {\cal D}(\theta_k)} [ \grd \ell( \theta; z ) ]$.
We provide detailed properties about the controlled MC in Appendix~\ref{app:br}. \hfill $\square$
\end{Exa}

The analysis of our state-dependent SA algorithm  \eqref{eq:sa}, \eqref{eq:mkv_setting} entails unique challenges not found in the literature. First, the agent's states $\{ z_k \}_{k \geq 1}$ form a \emph{controlled MC} whose transition probabilities are changing according to the learner's states $\{ \theta_k \}_{k \geq 0}$. Second, a plain SA update is used in \eqref{eq:sa} which does not require a compact constraint set as in the prior works such as \citep{atchade2017perturbed}. In fact, $\theta_k$ can be unbounded when the step size is not carefully selected.\vspace{-.1cm}

\section{Main Results} \label{sec:result}
\vspace{-.1cm}
Let us define the following shorthand notations  
\beq
f( \theta_1 ; \theta_2 ) = \EE_{z \sim {\cal D}(\theta_2) } [ \ell( \theta_1; z) ],~~\grd f(\theta_1;\theta_2) = \EE_{ z \sim {\cal D}(\theta_2) } [ \grd \ell( \theta_1 ; z) ],
\eeq
where the first argument $\theta_1$ controls the loss function value and the second argument $\theta_2$ controls the distribution shift. 
Notice that $\grd f(\theta_{PS}; \theta_{PS}) = 0$. We consider the following assumptions. First, the learner's loss is strongly convex in $\theta$, and its gradient map is Lipschitz continuous in $(\theta,z)$, i.e.,
\begin{assumption} \label{ass:strong}
	For each $z \in {\sf Z}$, there exists $\mu>0$ such that 
	\beq
	\ell( \theta ; z ) \geq \ell( \theta'; z ) + \pscal{ \grd \ell( \theta'; z ) }{ \theta - \theta' } + (\mu/2) \| \theta' - \theta \|^2, \quad \forall~\theta, \theta' \in \RR^d.
	\eeq
\end{assumption}
\begin{assumption}\label{ass:gradient}
	There exists $L \geq 0$ such that 
	\beq
	\begin{split}
	& \| \grd \ell( \theta; z ) -  \grd \ell( \theta' ; z' ) \| \leq L \big\{ \| \theta - \theta' \| + \| z - z' \| \big\}, \quad \forall~\theta, \theta' \in \RR^d,~z,z' \in {\sf Z}.
	\end{split}
	\eeq
\end{assumption}
Notice that as a consequence, the expected objective function $f(\theta_1, \theta_2)$ and gradient $\grd f(\theta_1; \theta_2)$ are $\mu$-strongly convex in $\theta_1$, and $L$-Lipschitz in $\theta_1$, respectively. These are standard assumptions in the optimization literature. As indicated by \citep{drusvyatskiy2020stochastic}, these conditions are necessary for finding a performative stable solution in \eqref{eq:ps}.

Second, we have the following assumption on the oscillation of the stochastic gradient $\grd \ell(\theta;z)$:
\begin{assumption}\label{ass:bounded} There exists $\sigma \geq 0$ such that 
	\beq \textstyle\textstyle	
	\sup_{ z \in {\sf Z} } \| \grd \ell( \theta; z ) - \grd f( \theta; \theta_{PS} ) \| \leq \sigma \big( 1 + \| \theta - \theta_{PS} \| ), \quad \forall~\theta \in \RR^d.
	\eeq
\end{assumption}
The above is slightly stronger than the assumptions on second order moments typically found in the stochastic gradient literature, e.g., \cite{bottou2018optimization}, as we require a uniform bound on the gradient noise. This condition is common for the algorithms using Markovian samples \citep{sun2018markov, srikant2019finite, karimi2019non}, which requires that the oscillation of stochastic gradient is controlled. For strategic classicaition problems, it is satisfied for the finite dataset setting in \Cref{ex:br}. 
Moreover, similar to \citep{doan2020finite}, this bound is adapted to the growth of $\| \theta - \theta_{PS} \|$ which is compatible with the strong convexity of the loss function $\ell(\theta; z)$. 


Our next set of assumptions pertain to the Markov kernels $\MK_{\theta} $ that generate $\{ z_k \}_{k \geq 1}$:
\begin{assumption} \label{ass:poisson}
	There exists a solution $\widehat{\grd \ell}: \RR^d \times {\sf Z} \rightarrow \RR^d$ to the Poisson equation:
	\beq \label{eq:poisson}
	\grd \ell( \theta'; z) - \grd f( \theta'; \theta ) = \widehat{\grd \ell} (\theta'; z) - \MK_{\theta} \widehat{\grd \ell} ( \theta'; z), \quad \forall~ \theta, \theta' \in \RR^d, z \in {\sf Z}.
	\eeq
\end{assumption}
\begin{assumption}\label{ass:poisson_bound}
	Consider the Poisson equation's solution $\widehat{\grd \ell} (\cdot; \cdot )$ defined in \Cref{ass:poisson}. There exists $\Lph \geq 0$ such that
	\beq \label{eq:smooth_poisson}
	\textstyle \sup_{ z \in {\sf Z} } \| \MK_{\theta} \widehat{\grd \ell}( \theta; z) - \MK_{\theta'} \widehat{\grd \ell} (\theta'; z) \| \leq \Lph \| \theta - \theta' \|, \quad \forall~\theta, \theta' \in \RR^d.
	\eeq
\end{assumption}
For Assumption~\ref{ass:poisson}, the existence of $\widehat{\grd \ell}$ in \eqref{eq:poisson} holds under mild assumptions on the Markov chains (MCs). For instance, it holds when the MC is irreducible and aperiodic, and satisfying a Lyapunov drift condition, or in the simpler case, when the MC is uniform geometrically ergodic, see \citep[Ch.~21.2]{douc2018markov}. 
In the case for performative prediction with stateful agents, the above condition holds when repeated applications of the iterative map \eqref{eq:ibr} where agents adopt their data to the current learner's model $\theta$ converges linearly to the best response. 
Assumption~\ref{ass:poisson_bound} is a smoothness condition on the kernel $\MK_\theta$ with respect to $\theta$. The condition can be satisfied when the Markov kernel is only slightly modified when $\theta$ is perturbed. 

\Cref{ass:poisson_bound} is also linked to our next assumption which is central to the study of performative prediction. Particularly, we require the distribution map $\cal D(\theta)$ to be $\epsilon$-sensitive w.r.t.~$\theta$:
\begin{assumption} \label{ass:sensitive} There exists $\epsilon \geq 0$ such that
	\beq
	W_1( {\cal D}(\theta), {\cal D}(\theta') ) = 
	\inf_{ J \in {\cal J}( {\cal D}(\theta), {\cal D}(\theta') )  }
	\EE_{ (z,z') \sim J } [ \| z - z' \|_1 ]
	\leq \epsilon \| \theta - \theta' \|,
	\eeq
	for any $\theta, \theta' \in \RR^d$. Notice that $W_1(\cdot)$ denotes the Wasserstein-1 distance and ${\cal J}( {\cal D}(\theta), {\cal D}(\theta') )$ is the set of all joint distributions on ${\sf Z} \times {\sf Z}$ with ${\cal D}(\theta), {\cal D}(\theta')$ as its marginal distribution. 
\end{assumption}
The above is a common condition for performative prediction, e.g., \citep{perdomo2020performative}. Intuitively, it allows for performative prediction algorithm to behave stably as the perturbation to ${\cal D}(\theta)$ is under control. In the subsequent analysis, we demonstrate that carefully controlling the step size in relation to $\mu, \epsilon, L$ is crucial to the convergence of Algorithm~1.



Before presenting our main result, we notice that Assumptions~\ref{ass:gradient}, \ref{ass:bounded}, \ref{ass:poisson_bound} imply that there exists constants $\overline{L}$, $\widehat{L} > 0$ such that for any $z \in {\sf Z}$, $\theta \in \RR^d$,
\beq \label{eq:corr}
 \| {\grd \ell}( \theta; z) \| \leq \overline{L} (1 + \| \theta - \theta_{PS} \| ), ~ \max\left\{ \| \widehat{\grd \ell}( \theta; z) \| , \big\|\MK_{\theta} \widehat{\grd\ell}( \theta; z ) \big\| \right\} \leq \LZ (1 + \| \theta - \theta_{PS} \| ),
\eeq
In other words, $\grd \ell(\theta; z)$, $\widehat{\grd \ell}(\theta; z)$, $\MK_\theta \widehat{\grd\ell}( \theta; z )$ are all locally bounded functions. 
Notice that $\overline{L}$ is proportional to $\sigma$ in Assumption~\ref{ass:bounded}, while $\widehat{L}$ is proportional to the maximum mixing time of the Markov chain induced by the kernel $\MK_\theta$ over all $\theta \in \RR^d$.

Our main result for the state-dependent SA algorithm is summarized as follows:
\begin{center}
\fbox{\begin{minipage}{.985\linewidth}\begin{Theorem} \label{th:main}
Under Assumptions~\ref{ass:strong}--\ref{ass:sensitive}. Suppose that the problem parameters satisfy $\epsilon < \frac{\mu}{L}$, the step sizes  $\{\gamma_{k}\}_{k\geq 1}$ are non-increasing and satisfy for any $k \geq 1$,
\beq \label{eq:stepsize_cond}
\frac{\gamma_{k-1}}{\gamma_{k}}\leq 1+ \frac{ \gamma_{k} ( {\mu} - L \epsilon )}{ 4 }, ~~ \gamma_k \leq \min \Big\{ \frac{\mu - L \epsilon}{2L^2}, \frac{ \mu - L \epsilon }{2 \Ctwo}, \frac{\min\{ (\mu - L \epsilon)/3, 3 \LZ\} }{ \Cthree + 3 \LZ ( \mu - L \epsilon) }, \frac{1}{6 \LZ} \Big\}.
\eeq 
Then for any $k \geq 1$, the expected distance between $\theta_{k}$ and the performative stable solution $\theta_{PS}$ satisfies
\beq \label{eq:main}
\EE[ \| \theta_k - \theta_{PS} \|^2 ] \leq \prod_{i=1}^k \Big( 1 - \gamma_i \frac{\mu - L \epsilon}{2} \Big) \| \theta_0 - \theta_{PS} \|^2 + \mathds{C} \!~ \gamma_k ,
\eeq
where $\EE[\cdot]$ is the expectation taken over all the randomness in \eqref{eq:sa}, \eqref{eq:mkv_setting}, and we have defined:
\beq
\mathds{C} := 
3 \LZ \overline{\Delta}  + \frac{4 \varsigma}{ \mu - L \epsilon }  \Big( 2 (2 \sigma^2 + \Cone) + (\mu - L \epsilon) \LZ + \Big( \Cthree + 3 (\mu - L \epsilon) \LZ \Big) \overline{\Delta} \Big) ,
\eeq
with $\varsigma := 1 + \gamma_1 ( \mu - L \epsilon ) / 4$, and $\Cone, \Ctwo, \Cthree$, $\overline{\Delta}$ are constants defined in \eqref{eq:cone_ctwo_main}, \eqref{eq:olDelta}, respectively.
\end{Theorem}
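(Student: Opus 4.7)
The plan is to derive a recursive inequality of the form $a_{k+1} \leq (1 - \tfrac{\gamma_{k+1}(\mu-L\epsilon)}{2})\,a_k + \gamma_{k+1}^2 B_k + R_k$, where $a_k := \EE[\|\theta_k-\theta_{PS}\|^2]$, and then to unroll it using the step-size schedule \eqref{eq:stepsize_cond}. I would start from the SA update \eqref{eq:sa}, subtract $\theta_{PS}$, and expand the squared norm:
\[
\|\theta_{k+1} - \theta_{PS}\|^2 = \|\theta_k - \theta_{PS}\|^2 - 2\gamma_{k+1}\pscal{\grd\ell(\theta_k; z_{k+1})}{\theta_k - \theta_{PS}} + \gamma_{k+1}^2\|\grd\ell(\theta_k; z_{k+1})\|^2.
\]
The squared-gradient term is controlled by the first inequality in \eqref{eq:corr}. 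For the cross term, I would add and subtract $\grd f(\theta_k; \theta_k)$ so as to separate the mean-field drift from the Markovian bias.

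For the mean-field part, using $\grd f(\theta_{PS};\theta_{PS})=0$, I would split
\[
\pscal{\grd f(\theta_k;\theta_k)}{\theta_k - \theta_{PS}} = \pscal{\grd f(\theta_k;\theta_k) - \grd f(\theta_{PS};\theta_k)}{\theta_k - \theta_{PS}} + \pscal{\grd f(\theta_{PS};\theta_k) - \grd f(\theta_{PS};\theta_{PS})}{\theta_k - \theta_{PS}}.
\]
Assumption~\ref{ass:strong} lower bounds the first inner product by $\mu\|\theta_k - \theta_{PS}\|^2$, while Assumption~\ref{ass:gradient} combined with the optimal $W_1$ coupling furnished by Assumption~\ref{ass:sensitive} upper bounds the magnitude of the second by $L\epsilon\|\theta_k - \theta_{PS}\|^2$. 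This produces the effective contraction factor $\mu - L\epsilon$, which is positive precisely because $\epsilon < \mu/L$.

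The genuine difficulty is the Markovian bias $\grd\ell(\theta_k; z_{k+1}) - \grd f(\theta_k; \theta_k)$, which, unlike in the i.i.d.\ setting of \cite{mendler2020stochastic}, does not vanish in conditional expectation. Invoking Assumption~\ref{ass:poisson}, I would rewrite the bias as $\widehat{\grd\ell}(\theta_k; z_{k+1}) - \MK_{\theta_k}\widehat{\grd\ell}(\theta_k; z_{k+1})$ and regroup it into a martingale increment $\widehat{\grd\ell}(\theta_k; z_{k+1}) - \MK_{\theta_k}\widehat{\grd\ell}(\theta_k; z_k)$ plus a correction $\MK_{\theta_k}\widehat{\grd\ell}(\theta_k; z_k) - \MK_{\theta_k}\widehat{\grd\ell}(\theta_k; z_{k+1})$. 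The first piece is a $\mathcal{F}_k$-martingale difference whose inner product with $\theta_k - \theta_{PS}$ is annihilated in conditional expectation. The second piece is handled by Abel summation across $k$: after reindexing, consecutive terms pair into $\MK_{\theta_{k-1}}\widehat{\grd\ell}(\theta_{k-1}; z_k) - \MK_{\theta_k}\widehat{\grd\ell}(\theta_k; z_k)$, which Assumption~\ref{ass:poisson_bound} bounds by $\Lph\|\theta_{k-1}-\theta_k\| \leq \Lph\gamma_k\|\grd\ell(\theta_{k-1}; z_k)\|$. This transfers an extra factor of $\gamma_k$ and demotes the bias to a second-order residual; every remaining gradient or Poisson-solution norm is then converted to a $(1 + \|\theta_j - \theta_{PS}\|)$-scaled quantity via \eqref{eq:corr}.

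The main obstacle is the lack of an a priori bound on $\|\theta_k - \theta_{PS}\|$: the growth factor $(1+\|\theta_j - \theta_{PS}\|)$ from Assumption~\ref{ass:bounded} and \eqref{eq:corr} infiltrates every error term, so a naive argument would only close under an already-bounded iterate assumption as in \cite{benveniste2012adaptive,karimi2019non}. To circumvent this I would exploit each threshold in \eqref{eq:stepsize_cond}, in particular $\gamma_k \leq (\mu-L\epsilon)/(2L^2)$ and the bounds involving $\LZ$ and $\Cone,\Ctwo,\Cthree$, so that every linear-in-$a_k$ second-order contribution is strictly dominated by half of the $(\mu-L\epsilon)$-contraction. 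This reduces the analysis to an effective recursion of the form $a_{k+1} \leq (1-\gamma_{k+1}(\mu-L\epsilon)/2)a_k + O(\gamma_{k+1}^2) + \widetilde{R}_k$, with $\widetilde{R}_k$ collecting Abel-summation residues; the monotonicity condition $\gamma_{k-1}/\gamma_k \leq 1+\gamma_k(\mu-L\epsilon)/4$ ensures these residues telescope cleanly, and a standard unrolling lemma for non-increasing step sizes delivers the $\mathcal{O}(\gamma_k)$ bound stated in \eqref{eq:main}, with constant $\mathds{C}$ assembled from the various Lipschitz, Poisson, and variance constants.
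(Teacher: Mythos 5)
Your overall route coincides with the paper's: the same one-step expansion with the $\mu-L\epsilon$ contraction extracted from strong convexity plus $\epsilon$-sensitivity (your three-way split of the mean-field inner product is a harmless permutation of the paper's), the same Poisson-equation rewriting of the bias into a martingale increment plus kernel-shift terms, the same Abel/summation-by-parts pairing controlled by \Cref{ass:poisson_bound}, and the same use of \eqref{eq:corr} to convert all residuals into $(1+\|\theta_j-\theta_{PS}\|)$-scaled quantities. Up to and including the analogue of \Cref{lem:poisson}, your sketch would execute essentially as the paper does.

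The gap is in the final closure step. After the summation by parts, the boundary term of the Abel decomposition (the paper's $A_5$) contributes $3\LZ\gamma_k\Delta_{k-1}$ to the bound on $\Delta_k$ --- a term that is \emph{first order} in $\gamma_k$ and linear in $\Delta_{k-1}$. Your proposed mechanism, that ``every linear-in-$a_k$ second-order contribution is strictly dominated by half of the $(\mu-L\epsilon)$-contraction,'' handles the genuinely second-order terms $\gamma_k^2(\Ctwo\Delta_{k-1}+\Cthree\Delta_{k-2})$ via $\gamma_k\le\tilde\mu/(2\Ctwo)$ etc., but it cannot absorb $3\LZ\gamma_k\Delta_{k-1}$: comparing it against the contraction $\tfrac{\tilde\mu}{2}\gamma_k\Delta_{k-1}$ reduces to requiring $3\LZ\lesssim \mu-L\epsilon$, a condition on problem constants that is not assumed and generally fails (recall $\LZ$ scales with the mixing time). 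Nor does this term simply ``telescope cleanly'' into an $O(\gamma_k)$ residue, because the telescoped quantity is $\gamma_k\mathrm{U}_{k-1}$, which is only $O(\gamma_k)$ if you already know the iterates' mean-squared error is bounded --- exactly the a priori boundedness you correctly refuse to assume. The paper breaks this circularity with a separate two-stage argument (\Cref{lem:bdd}): it first sums the recursion over $k$, exploits the telescoping structure $\gamma_s\mathrm{U}_{s-1}-\gamma_{s-1}\mathrm{U}_{s-2}$ together with the thresholds $\gamma_k\le(6\LZ)^{-1}$ and $\gamma_k\le(\Cthree+3\LZ\tilde\mu)^{-1}\min\{\tilde\mu/2,3\LZ\}$ to derive a contraction of the form $\oU_t\le\tfrac12\oU_{t-1}+\mathrm{const}$, concluding $\sup_k\Delta_k\le\overline{\Delta}$; only then does it substitute this uniform bound back into the recursion to obtain the $O(\gamma_k)$ rate with the stated constant $\mathds{C}$. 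Your sketch needs this intermediate uniform-boundedness lemma; without it the unrolling does not close.
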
\end{minipage}}
\end{center}
The above establishes the finite-time convergence of the studied state-dependent SA algorithm \eqref{eq:sa}, \eqref{eq:mkv_setting}. To understand this result, we observe that the step size conditions in \eqref{eq:stepsize_cond} can be satisfied by a variety of step size schedules. For instance, it can be satisfied by the constant step size $\gamma_k \equiv \gamma$; and the diminishing step size $\gamma_{k}=a_{0}/(a_{1}+k) = {\cal O}(1/k)$ with appropriate $a_0, a_1 > 0$. 
Moreover, we require the SA algorithm to work in the regime when $\epsilon < \mu / L$. This is similar to \cite{perdomo2020performative} which ensures that the solution $\theta_{PS}$ is a stable fixed point to \eqref{eq:ps}.

The main result is stated on the expected squared distance of $\| \theta_k - \theta_{PS} \|^2$ in \eqref{eq:main}. Here, the bound consists of a \emph{transient} term and a \emph{fluctuation} term. The \emph{transient} term decays sub-exponentially as ${\cal O}( \exp( - \frac{\mu-L\epsilon}{2} \sum_{i=1}^k \gamma_i ) )$ and is scaled by the initial gap $\| \theta_0 - \theta_{PS} \|^2$. The \emph{fluctuation} term is in the order of ${\cal O}(\gamma_k)$ and is scaled by $\mathds{C}$ which depends on the oscillation of stochastic gradient (via $\sigma$) and the mixing time of the controlled Markov chain (via $\LZ$). 
With a diminishing step size schedule such as $\gamma_{k}= c_{0}/( c_{1}+k)$, \Cref{th:main} shows that the state dependent SA algorithm finds the performative stable solution $\theta_{PS}$ at the rate of ${\cal O}(1/k)$ in expectation.  

\paragraph{On Non-strongly-convex Loss Function} An obvious shortcoming with \Cref{th:main} is the requirement of strongly convex loss functions [cf.~\Cref{ass:strong}]. Below, we comment on the convergence of the state-dependent SA algorithm \eqref{eq:sa}, \eqref{eq:mkv_setting} when the loss function is not strongly convex (possibly non-convex). Notice that in the absence of \Cref{ass:strong}, the performative stable solution $\theta_{PS}$ may not be well defined. We resort to finding a stationary point to the performative prediction problem \eqref{eq:performative}.

Our idea is to view \eqref{eq:sa}, \eqref{eq:mkv_setting} as a biased SA algorithm with mean field $h(\theta) = \EE_{z \sim {\cal D}(\theta)} [ \grd \ell( \theta; z )] = \grd f(\theta; \theta)$. This mean field is correlated with the gradient for the performative loss in \eqref{eq:performative}. Under additional assumptions on $\ell(\theta;z)$, ${\cal D}(\theta)$, in Appendix~\ref{app:noncvx} we show
\beq \label{eq:biased}
\pscal{ h(\theta) }{ \grd \EE_{ z \sim {\cal D}(\theta) } [ \ell( \theta; z ) ] } \geq \| h(\theta) \|^2 / 2 - {\rm c}_0 ,~~\forall~\theta \in \RR^d,
\eeq
where ${\rm c}_0 > 0$ is a bias dependent on the sensitivity of distribution shift [cf.~\Cref{ass:sensitive}].
We define the constant ${\rm c}_0$ in \eqref{app:c_0} of Appendix~\ref{app:noncvx}, which is shown to be dependent on $\ell(\theta; z)$,  $\grd_{\theta} \log(p_{\mathcal{D}(\theta)}(z))$. 

It should be pointed that the algorithm \eqref{eq:sa} may not provide a `good' solution to the non-convex performative learning problem \eqref{eq:performative}. However, if the  state-dependent distribution is not too sensitive to the change of state, the following corollary shows that \eqref{eq:sa} would still converge to a ${\rm c}_{0}$-neighborhood of a stationary solution. Before we discuss the main statement, we need two additional assumptions: 
\begin{manualtheorem}{2'} \label{ass:twoprime}
    The function $V(\theta) = \EE_{ z \sim {\cal D}(\theta) } [ \ell( \theta; z ) ]$ is continuously differentiable and there exists $L_V \geq 0$ such that $\| \grd V(\theta) - \grd V(\theta') \| \leq L_V \| \theta - \theta' \|$ for any $\theta, \theta' \in \RR^d$. 
\end{manualtheorem}
\begin{manualtheorem}{3'} \label{ass:threeprime}
    There exists $\sigma \geq 0$ such that $\| \grd \ell(\theta; z) - \grd f(\theta; \theta) \| \leq \sigma$ for any $\theta \in \RR^d, z \in {\sf Z}$.
\end{manualtheorem}
The above are stronger conditions than Assumptions~\ref{ass:gradient}, \ref{ass:bounded}, yet are reasonable settings for certain non-convex loss functions. 
For instance, as we will show in Appendix \ref{app:noncvx}, Assumption \ref{ass:twoprime} holds if $\ell(\theta ; z), \nabla_{\theta} \log \left(p_{D(\theta)}(z)\right)$ are bounded, and $\nabla_{\theta} \log \left(p_{D(\theta)}(z)\right)$ to be Lipschitz w.r.t. $\theta$, e.g., when $\mathcal{D}(\theta)$ is `smooth', e.g., it is Gaussian or softmax distribution. 
Assumption \ref{ass:threeprime} holds under similar condition as Assumption \ref{ass:bounded}, e.g., ${\sf Z}$ is compact. 
We obtain the following as a corollary of \citep[Theorem 2]{karimi2019non}:
\begin{center}
\fbox{\begin{minipage}{.985\linewidth}
\begin{Corollary} \label{cor:ncvx}
Under Assumptions~\ref{ass:twoprime}, \ref{ass:threeprime}, \ref{ass:poisson}, \ref{ass:poisson_bound}, and let  \eqref{eq:biased} holds. With a step size sequence that decays in the order of $\gamma_k = {\cal O}(1/\sqrt{k})$, it holds for any $K \geq 1$ that
	\beq \textstyle 
	\EE[ \| \grd V( \theta_{\sf K} ) \|^2 ] = {\cal O} ( \log K / \sqrt{K} + {\rm c}_0 ),~~\PP ( {\sf K} = k ) = \gamma_k / \sum_{j=1}^K \gamma_j,~\forall~k \in \{1,...,K\}.
	\eeq
Note that ${\sf K} \in \{1,...,K\}$ is a discrete r.v.~independent of the randomness in the SA algorithm and $\EE[\cdot]$ denotes the total expectation. 
\end{Corollary}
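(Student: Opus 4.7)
The plan is to match \eqref{eq:sa}, \eqref{eq:mkv_setting} with the biased SA framework of \citep{karimi2019non} and invoke their Theorem~2 with Lyapunov function $V(\theta)$. Setting $h(\theta) := \grd f(\theta;\theta)$, the recursion takes the canonical form $\theta_{k+1} = \theta_k - \gamma_{k+1}\bigl(h(\theta_k) + e_{k+1}\bigr)$ with noise $e_{k+1} := \grd \ell(\theta_k; z_{k+1}) - h(\theta_k)$; since $z_{k+1}$ is drawn from the controlled kernel $\MK_{\theta_k}(z_k, \cdot)$ whose invariant distribution is ${\cal D}(\theta_k)$, the term $e_{k+1}$ is a Markov-noise increment rather than a martingale difference.

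My first step is to verify the hypotheses of \citep[Theorem 2]{karimi2019non} in this notation. The smoothness of the Lyapunov function $V$ comes from \Cref{ass:twoprime}; the uniform bound on $\|e_{k+1}\|$ is \Cref{ass:threeprime}; the existence and $\theta$-smoothness of a Poisson solution $\widehat{\grd \ell}$ that permits the Markov noise to be handled telescopically are precisely \Cref{ass:poisson} and \Cref{ass:poisson_bound}, with the local bound \eqref{eq:corr} supplying the linear-growth control required in their perturbation argument. Finally, the ``biased descent'' condition of their theorem, $\pscal{h(\theta)}{\grd V(\theta)} \geq \|h(\theta)\|^2/2 - {\rm c}_0$, is exactly \eqref{eq:biased}: once this is in hand, $-h(\theta_k)$ is a descent direction for $V$ up to the performativity-induced bias ${\rm c}_0$.

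With all hypotheses verified, \citep[Theorem 2]{karimi2019non} applied with $\gamma_k = {\cal O}(1/\sqrt{k})$ and the randomized index ${\sf K}$ yields $\EE[\|h(\theta_{\sf K})\|^2] = {\cal O}(\log K/\sqrt{K} + {\rm c}_0)$. To translate this into the stated bound on $\EE[\|\grd V(\theta_{\sf K})\|^2]$, I use the decomposition $\grd V(\theta) = h(\theta) + g(\theta)$, where $g(\theta) := \EE_{z \sim {\cal D}(\theta)}[\ell(\theta;z)\grd_{\theta}\log p_{{\cal D}(\theta)}(z)]$ is the REINFORCE correction. The construction of ${\rm c}_0$ in Appendix~\ref{app:noncvx} (cf.~\eqref{app:c_0}) is designed to ensure $\|g(\theta)\|^2 = {\cal O}({\rm c}_0)$, so that $\|\grd V(\theta)\|^2 \leq 2\|h(\theta)\|^2 + 2\|g(\theta)\|^2 = 2\|h(\theta)\|^2 + {\cal O}({\rm c}_0)$, and the corollary follows after taking expectations through the randomized index.

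The main obstacle is not any new calculation but cleanly disentangling the two distinct sources of bias present in this setting---the Markov-sampling bias, which vanishes at rate ${\cal O}(\log K/\sqrt{K})$ through the Poisson telescoping, versus the performativity mismatch between $h$ and $\grd V$, which survives as the irreducible ${\cal O}({\rm c}_0)$ neighborhood. Each must be matched precisely to the corresponding ingredient in the hypotheses of \citep[Theorem 2]{karimi2019non}, which is what allows the result to be stated as a direct corollary rather than requiring a standalone finite-time argument.
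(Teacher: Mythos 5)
Your proposal is correct and follows essentially the same route as the paper: the appendix likewise casts \eqref{eq:sa}, \eqref{eq:mkv_setting} as a biased Markov-noise SA with mean field $h(\theta)=\grd f(\theta;\theta)$, verifies that \eqref{eq:biased} and the bound $\|\grd V(\theta)\|\leq\|h(\theta)\|+\sqrt{2{\rm c}_0}$ (your $\|g(\theta)\|^2={\cal O}({\rm c}_0)$ step, obtained via Jensen from the definition of ${\rm c}_0$) supply conditions A1--A2 of \citet{karimi2019non} while Assumptions~\ref{ass:twoprime}, \ref{ass:threeprime}, \ref{ass:poisson}, \ref{ass:poisson_bound} supply the rest, and then invokes their Theorem~2 with $\gamma_k={\cal O}(1/\sqrt{k})$. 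No gaps.
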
\end{minipage}}\end{center}
See the details in Appendix~\ref{app:noncvx}. 
The above corollary shows that even without the strong convexity assumption, the state dependent SA algorithm finds an ${\cal O}(\log K / \sqrt{K} + {\rm c}_0)$-stationary solution to the performative prediction problem \eqref{eq:performative} in at most $K$ iterations.

\section{Proof Outline of \Cref{th:main}} \label{sec:pf}
We outline the main steps in proving \Cref{th:main}. Our proof strategy consists in tracking the progress of the mean squared error $\Delta_k \eqdef \EE[ \| \theta_k - \theta_{PS} \|^2 ]$. 
To simplify notations, we define $\tilde{\mu} \eqdef \mu - L \epsilon$, the scalar product $G_{m:n} = \prod_{i=m}^n (1 - \gamma_i \tilde{\mu})$, for $n > m \geq 1$, and $G_{m:n} = 1$ if $n \leq m$. 

The following lemma describes the one-step progress of the SA algorithm. 
\begin{Lemma} \label{lem:square}
    Under Assumptions~\ref{ass:strong}, \ref{ass:gradient}, \ref{ass:bounded}, \ref{ass:sensitive}. For any $k \geq 0$, it holds
    \beq \label{eq:square_1st}
    \begin{split}
    \| \theta_{k+1} - \theta_{PS} \|^2 & \leq \big( 1 - 2 \gamma_{k+1} \tilde{\mu} + 2L^2 \gamma_{k+1}^2 \big) \| \theta_{k} - \theta_{PS} \|^2 \\
    & \quad + 2 \sigma^2 \gamma_{k+1}^2 - 2 \gamma_{k+1} \pscal{ \theta_k - \theta_{PS} }{ \grd \ell( \theta_k; z_{k+1} ) - \grd f( \theta_k; \theta_k ) } .
    \end{split}
    \eeq
\end{Lemma}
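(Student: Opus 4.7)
}
The plan is to expand the squared distance recursively from the update rule \eqref{eq:sa}, split the inner product by inserting the mean-field gradient $\grd f(\theta_k;\theta_k)$, and then invoke the listed assumptions to control each resulting piece. Concretely, starting from $\theta_{k+1} - \theta_{PS} = (\theta_k - \theta_{PS}) - \gamma_{k+1}\grd\ell(\theta_k;z_{k+1})$ and squaring gives the identity
\beq
\|\theta_{k+1}-\theta_{PS}\|^2 = \|\theta_k-\theta_{PS}\|^2 - 2\gamma_{k+1}\pscal{\theta_k-\theta_{PS}}{\grd\ell(\theta_k;z_{k+1})} + \gamma_{k+1}^2\|\grd\ell(\theta_k;z_{k+1})\|^2.
\eeq
I would immediately rewrite the middle term by adding and subtracting $\grd f(\theta_k;\theta_k)$, which produces the ``residual'' inner product appearing in the statement of the lemma plus a deterministic term $-2\gamma_{k+1}\pscal{\theta_k-\theta_{PS}}{\grd f(\theta_k;\theta_k)}$ that has to be shown to dominate $\tilde{\mu}\|\theta_k-\theta_{PS}\|^2$.

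The key step, and the most delicate one, is the lower bound
\beq
\pscal{\theta_k - \theta_{PS}}{\grd f(\theta_k;\theta_k)} \geq \tilde{\mu} \|\theta_k - \theta_{PS}\|^2 = (\mu - L\epsilon)\|\theta_k - \theta_{PS}\|^2.
\eeq
I would obtain it by splitting
\beq
\grd f(\theta_k;\theta_k) = \big[\grd f(\theta_k;\theta_k) - \grd f(\theta_{PS};\theta_k)\big] + \big[\grd f(\theta_{PS};\theta_k) - \grd f(\theta_{PS};\theta_{PS})\big],
\eeq
using $\grd f(\theta_{PS};\theta_{PS}) = 0$. Strong convexity (\Cref{ass:strong}) of $\theta_1 \mapsto f(\theta_1;\theta_k)$ handles the first bracket and contributes the $\mu\|\theta_k-\theta_{PS}\|^2$ term. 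For the second bracket, I would couple two expectations $\EE_{z \sim {\cal D}(\theta_k)}[\grd\ell(\theta_{PS};z)]$ and $\EE_{z \sim {\cal D}(\theta_{PS})}[\grd\ell(\theta_{PS};z)]$ through an arbitrary joint law $J$, apply \Cref{ass:gradient} pointwise, infimise over $J$, and invoke the Wasserstein-1 sensitivity (\Cref{ass:sensitive}) to get $\|\grd f(\theta_{PS};\theta_k)\| \leq L\epsilon\|\theta_k-\theta_{PS}\|$. A Cauchy--Schwarz then yields $-L\epsilon\|\theta_k-\theta_{PS}\|^2$, and adding both contributions gives $\tilde{\mu}\|\theta_k-\theta_{PS}\|^2$ as required.

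For the quadratic gradient-norm term, the plan is to derive $\|\grd\ell(\theta_k;z_{k+1})\|^2 \leq 2L^2\|\theta_k - \theta_{PS}\|^2 + 2\sigma^2$ and multiply by $\gamma_{k+1}^2$. The decomposition
\beq
\grd\ell(\theta_k;z_{k+1}) = \big[\grd\ell(\theta_k;z_{k+1}) - \grd\ell(\theta_{PS};z_{k+1})\big] + \grd\ell(\theta_{PS};z_{k+1}),
\eeq
combined with the inequality $\|a+b\|^2 \leq 2\|a\|^2 + 2\|b\|^2$, together with \Cref{ass:gradient} applied to the first bracket and the uniform bound $\|\grd\ell(\theta_{PS};z)\| \leq \sigma$ obtained by specialising \Cref{ass:bounded} at $\theta=\theta_{PS}$ (recall $\grd f(\theta_{PS};\theta_{PS})=0$), delivers exactly the desired inequality. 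Collecting the pieces and keeping the residual inner product un-evaluated produces the stated one-step bound \eqref{eq:square_1st}.

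The main obstacle is really the interplay between the Wasserstein-$1$ condition (formulated with $\|\cdot\|_1$) and the Euclidean Lipschitz constant $L$ of the gradient map; I will bridge them using $\|z-z'\|_2 \leq \|z-z'\|_1$ so that the factor appearing in the strong-convexity-minus-sensitivity combination comes out cleanly as $\tilde{\mu} = \mu - L\epsilon$. Everything else is routine algebraic manipulation with no stochastic averaging, so the whole lemma holds pathwise.
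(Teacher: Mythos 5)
Your proposal is correct and follows essentially the same route as the paper's proof: expand the square, insert $\grd f(\theta_k;\theta_k)$ to isolate the residual inner product, combine strong convexity with the Wasserstein sensitivity bound to extract the $\tilde{\mu}=\mu-L\epsilon$ contraction, and bound the gradient-norm term by $2L^2\|\theta_k-\theta_{PS}\|^2+2\sigma^2$ via Assumptions~\ref{ass:gradient} and \ref{ass:bounded} at $\theta_{PS}$. The only (immaterial) difference is the intermediate point in the telescoping of $\grd f(\theta_k;\theta_k)$ — you pass through $\grd f(\theta_{PS};\theta_k)$ while the paper passes through $\grd f(\theta_k;\theta_{PS})$ — and both yield the same constants.
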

The proof can be found in Appendix~\ref{app:square}, which involves a simple expansion of the squared error.
The above lemma suggests that the sensitivity parameter shall satisfy $\epsilon < {\mu} / {L}$ to ensure $\tilde{\mu} > 0$. Furthermore, the step size condition 
$\sup_{k \geq 1} \gamma_{k} \leq {\tilde{\mu}} / (2 L^2)$ in \eqref{eq:stepsize_cond} leads to $1 - 2 \gamma_{k+1} \tilde{\mu} + 2L^2 \gamma_{k+1}^2 \leq 1 - \gamma_{k+1} \tilde{\mu}$ such that the first term in the r.h.s.~of \eqref{eq:square_1st} is a contraction. 
Under the above premises and suppose $z_{k+1} \overset{\text{i.i.d.}}{\sim} {\cal D}( \theta_k )$ as in the case of \citep{mendler2020stochastic, drusvyatskiy2020stochastic}, the stochastic gradient in \eqref{eq:square_1st} is conditionally \emph{unbiased}. As such, \Cref{lem:square} leads to the recursion $\Delta_{k+1} \leq ( 1 - \gamma_{k+1} \tilde{\mu} ) \Delta_k + 2 \sigma^2 \gamma_{k+1}^2$, implying $\Delta_k = {\cal O}(\gamma_k)$.

However, for the state-dependent SA algorithm  \eqref{eq:sa}, \eqref{eq:mkv_setting},  the stochastic gradient $\grd \ell( \theta_k; z_{k+1} )$ is conditionally \emph{biased} and is driven by a controlled MC. Under the stepsize condition $\sup_{k \geq 1} \gamma_{k} \leq {\tilde{\mu}} / (2 L^2)$, taking the total expectation and solving the recursion in \eqref{eq:square_1st} yield
\beq \label{eq:inter}
\begin{split}
\Delta_{k} & \textstyle \leq G_{1:k} \Delta_0 + 2 \sigma^2 \sum_{s=0}^{k-1} G_{s+2:k} \gamma_{s+1}^2 \\
& \textstyle \quad + 2 \sum_{s=0}^{k-1} G_{s+2:k} \gamma_{s+1} \EE\big[ \pscal{ \theta_{PS} - \theta_s }{ \grd \ell( \theta_s; z_{s+1}) - \grd f(\theta_s; \theta_s)  } \big] .
\end{split}
\eeq
It can be shown that the first two terms are bounded by ${\cal O}(\gamma_k)$. We are interested in analyzing the last term when the samples $\{ z_{k} \}_{k \geq 1}$ are drawn according to \eqref{eq:mkv_setting}. Observe that:
\begin{Lemma} \label{lem:poisson}
    Under Assumptions~\ref{ass:gradient}--\ref{ass:sensitive} and the stepsize conditions in \eqref{eq:stepsize_cond}. For any $k \geq 1$, it holds
	\beq \notag
	\begin{split}
	&  2 \sum_{s=0}^{k-1} G_{s+2:k} \gamma_{s+1} \EE\big[ \pscal{ \theta_{PS} - \theta_s }{ \grd \ell( \theta_s; z_{s+1}) - \grd f(\theta_s; \theta_s)  } \big] \leq \\
	& \sum_{s=2}^k \gamma_s^2 G_{s+1:k} \big( \Cone + \Ctwo \Delta_{s-1} + \Cthree \Delta_{s-2} \big) + \gamma_1 G_{2:k} \big\{ \LZ (1 + 3 \Delta_0 ) + \gamma_1 \Cone \big\} + \gamma_k \LZ \big\{ 1 + 3 \Delta_{k-1} \big\},
	\end{split}
	\eeq
	where we have defined the constants:
	\beq \label{eq:cone_ctwo_main}
	\Cone := \varsigma \Lph \overline{L} + 4 \varsigma \overline{L} \LZ + ( 1 + \tilde{\mu} ) \varsigma \LZ, ~~\Ctwo := 2 \varsigma \Lph \overline{L},~~
	\Cthree := \Cone + 2 ( 1 + \tilde{\mu} )\varsigma \LZ.
	\eeq
\end{Lemma}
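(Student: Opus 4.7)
The plan is to use the Poisson equation in Assumption~\ref{ass:poisson} to decompose the Markovian bias $\grd \ell(\theta_s; z_{s+1}) - \grd f(\theta_s; \theta_s)$ into (i) a martingale increment that vanishes under total expectation, (ii) a telescoping difference handled by Abel summation, and (iii) a smoothness residual of order $\gamma_{s+1}$ controlled by Assumption~\ref{ass:poisson_bound}. Writing $w_s := \gamma_{s+1}G_{s+2:k}$, $p_s := \theta_{PS} - \theta_s$, and $g_s := \MK_{\theta_s}\widehat{\grd \ell}(\theta_s; z_s)$, the key observation is that $\EE[\widehat{\grd \ell}(\theta_s; z_{s+1}) \mid \mathcal{F}_s] = g_s$ since $z_{s+1} \sim \MK_{\theta_s}(z_s, \cdot)$, so $w_s p_s$ is $\mathcal{F}_s$-measurable and any martingale increment it multiplies integrates to zero.

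Applying the Poisson equation at $\theta' = \theta = \theta_s$, $z = z_{s+1}$ and inserting $\pm g_s, \pm g_{s+1}$, I would write
\[
\grd \ell(\theta_s; z_{s+1}) - \grd f(\theta_s; \theta_s) = \bigl(\widehat{\grd \ell}(\theta_s; z_{s+1}) - g_s\bigr) + (g_s - g_{s+1}) - \bigl(\MK_{\theta_s}\widehat{\grd \ell}(\theta_s; z_{s+1}) - g_{s+1}\bigr).
\]
The first parenthesis is the martingale increment and integrates to zero against $w_s p_s$. The last parenthesis equals $\MK_{\theta_s}\widehat{\grd \ell}(\theta_s; z_{s+1}) - \MK_{\theta_{s+1}}\widehat{\grd \ell}(\theta_{s+1}; z_{s+1})$, which Assumption~\ref{ass:poisson_bound} bounds by $\Lph\|\theta_{s+1} - \theta_s\| = \Lph\gamma_{s+1}\|\grd \ell(\theta_s; z_{s+1})\|$; after multiplying by $w_s$ and using the uniform bound~\eqref{eq:corr}, it produces an $O(\gamma_{s+1}^2)$ bulk contribution polynomial in $\Delta_{s-1}, \Delta_s$.

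The heart of the argument is the middle piece $\sum_{s=0}^{k-1} w_s \pscal{p_s}{g_s - g_{s+1}}$, to which I would apply Abel summation:
\[
\sum_{s=0}^{k-1} w_s \pscal{p_s}{g_s - g_{s+1}} = w_0\pscal{p_0}{g_0} - w_{k-1}\pscal{p_{k-1}}{g_k} + \sum_{s=1}^{k-1}\bigl[(w_s - w_{s-1})\pscal{p_{s-1}}{g_s} + w_{s-1}\pscal{p_s - p_{s-1}}{g_s}\bigr].
\]
The two boundary terms, bounded via~\eqref{eq:corr} and Cauchy--Schwarz together with the Lipschitz residual at the endpoints, account for the $\gamma_1 G_{2:k}\{\LZ(1+3\Delta_0) + \gamma_1 \Cone\}$ and $\gamma_k\LZ\{1+3\Delta_{k-1}\}$ contributions in the claim. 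Inside the sum, the SA recursion~\eqref{eq:sa} yields $p_s - p_{s-1} = \gamma_s\grd \ell(\theta_{s-1}; z_s)$, and the step-size condition~\eqref{eq:stepsize_cond} combined with $G_{s+1:k} = (1 - \gamma_{s+1}\tilde\mu)G_{s+2:k}$ gives $|w_s - w_{s-1}| = O(\gamma_{s+1}) \cdot w_s$ with a multiplicative constant no worse than $\varsigma = 1 + \gamma_1\tilde\mu/4$.

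Combining these estimates with Cauchy--Schwarz and the elementary inequality $(1+a)(1+b) \leq 2 + a^2 + b^2$, every bulk term becomes an expression polynomial in $\|\theta_{s-1} - \theta_{PS}\|^2$ and $\|\theta_s - \theta_{PS}\|^2$ carrying a $\gamma_{s+1}^2 G_{s+2:k}$ factor. Taking total expectation --- which converts $\|\theta_j - \theta_{PS}\|^2$ into $\Delta_j$ --- and shifting the running index by one (so the sum ranges from $s = 2$ to $s = k$) identifies the two quadratic contributions with $\Delta_{s-2}$ and $\Delta_{s-1}$, producing the advertised form $\gamma_s^2 G_{s+1:k}(\Cone + \Ctwo\Delta_{s-1} + \Cthree\Delta_{s-2})$. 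The principal difficulty is the bookkeeping: three independent sources --- the kernel Lipschitz residual, the gradient step $p_s - p_{s-1}$, and the weight increment $w_s - w_{s-1}$ --- each produce cross products $\|\cdot\|\|\cdot\|$ at two different time indices, and the constants $\Cone, \Ctwo, \Cthree$ in~\eqref{eq:cone_ctwo_main} must be tuned so that every such product is absorbed (with $\varsigma$ handling the weight-ratio correction) without disturbing the contraction factor $G_{s+1:k}$ that eventually drives the ${\cal O}(\gamma_k)$ rate in Theorem~\ref{th:main}.
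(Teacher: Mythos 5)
Your proposal follows essentially the same route as the paper: the paper's proof decomposes the weighted sum into five terms $A_1,\dots,A_5$, where $A_1$ is exactly your martingale increment, $A_2$ is your kernel-smoothness residual bounded via Assumption~\ref{ass:poisson_bound}, and $A_3,A_4,A_5$ are precisely the three pieces produced by your Abel summation of the telescoping term (the displacement term $\pscal{p_s-p_{s-1}}{g_s}$, the weight-increment term, and the two boundary terms), with the same use of \eqref{eq:corr} and the step-size conditions to absorb everything into $\gamma_s^2 G_{s+1:k}(\Cone+\Ctwo\Delta_{s-1}+\Cthree\Delta_{s-2})$. The only correction needed is a bookkeeping slip in your summation-by-parts identity: if the weight-increment term carries $\pscal{p_{s-1}}{g_s}$ then the displacement term must carry the weight $w_s$ rather than $w_{s-1}$, i.e.\ $w_s\pscal{p_s}{g_s}-w_{s-1}\pscal{p_{s-1}}{g_s}=(w_s-w_{s-1})\pscal{p_{s-1}}{g_s}+w_s\pscal{p_s-p_{s-1}}{g_s}$, which changes nothing downstream.
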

The analysis is inspired by \citep{benveniste2012adaptive} and has been adopted in recent works such as \citep{atchade2017perturbed,karimi2019non}; see the details in Appendix~\ref{app:poisson}. To handle the controlled MC, our technique involves applying the Poisson equation in \Cref{ass:poisson} and decomposing the gradient error $\grd \ell( \theta_s; z_{s+1}) - \grd f(\theta_s; \theta_s)$ into Martingale and finite difference terms. 

A key difference between \Cref{lem:poisson} and analysis in the previous works is that the latter assumed that the stochastic gradients $\grd \ell(\theta_k; z_{k+1} )$ are bounded which greatly simplifies the proof. Our assumptions are significantly weaker as the latter actually grows as ${\cal O}( 1 + \| \theta_k - \theta_{PS} \|)$ [cf.~\Cref{ass:bounded}]. 
From the analysis standpoint, this demands a new proof technique as we present next. 

To proceed, observe that substituting \Cref{lem:poisson} into \eqref{eq:inter} yields:\vspace{-.1cm}
\beq
\begin{split}
	\Delta_{k} & \textstyle \leq G_{1:k} \Delta_0 + \sum_{s=1}^{k-1} \gamma_{s+1}^2 G_{s+2:k} \big( 2 \sigma^2 + \Cone + \Ctwo \Delta_{s} + \Cthree \Delta_{s-1} \big) \\
	& \quad + \gamma_1 G_{2:k} \big\{ \LZ (1 + 3 \Delta_0 ) + \gamma_1 ( 2 \sigma^2 + \Cone ) \big\} + \gamma_k \LZ \big\{ 1 + 3 \Delta_{k-1} \big\}.
\end{split}
\eeq
With the first step size condition in \eqref{eq:stepsize_cond}, we can apply the auxiliary result in \Cref{lem:o_gamma_k} from the appendix, which simplifies the upper bound as\vspace{-.1cm}
\beq \label{eq:lem5}
\begin{split}
	\Delta_{k} & \textstyle  \leq G_{1:k} \Delta_0 + \Big( \frac{ 2}{ \tilde{\mu} } (2 \sigma^2 + \Cone) + \LZ \Big) \gamma_k + \sum_{s=1}^{k-1} \gamma_{s+1}^2 G_{s+2:k} \big( \Ctwo \Delta_{s} + \Cthree \Delta_{s-1} \big) \\
	& \quad + \gamma_1 G_{2:k} \big\{ \LZ (1 + 3 \Delta_0 ) + \gamma_1 ( 2 \sigma^2 + \Cone ) \big\} + 3 \gamma_k \LZ \Delta_{k-1} .
\end{split}
\eeq
Observe that the first row in \eqref{eq:lem5} is already in a similar form to the bound presented in the theorem. The key issue lies with the last term $3 \gamma_k \LZ \Delta_{k-1}$ which may be unbounded. We show that our choice of step sizes in \eqref{eq:stepsize_cond} ensures the convergence of $\Delta_k$ to ${\cal O}(\gamma_k)$:
\begin{Lemma} \label{lem:bdd}
	Suppose that $\{ \Delta_k \}_{k \geq 0}$ satisfy \eqref{eq:lem5} and the step sizes $\{ \gamma_k \}_{k \geq 1}$ satisfy \eqref{eq:stepsize_cond}. It holds {\sf (i)}\vspace{-.1cm}
	\beq \label{eq:olDelta}
	\sup_{k \geq 0} \Delta_k \leq \overline{\Delta} := 3 \Delta_0 + \frac{\varsigma}{ 9 \LZ^2 } \Big( 2 (2 \sigma^2 + \Cone) + (\mu - L \epsilon) \LZ \Big), \vspace{-.2cm}
	\eeq
    and {\sf (ii)} the following inequality holds for any $k \geq 1$:
	\beq \label{eq:bdDelta}
	\Delta_k \leq \prod_{i=1}^k ( 1 - \gamma_i \frac{\tilde{\mu}}{2}) \Delta_0 + \Big\{ 3 \LZ \overline{\Delta}  + \frac{4 \varsigma}{ \tilde{\mu}}  \Big( 2 (2 \sigma^2 + \Cone) + \tilde{\mu} \LZ + \Big( \Cthree + 3 \LZ \tilde{\mu} \Big) \overline{\Delta} \Big) \Big\} \gamma_k. 
	\eeq
	\vspace{-.6cm}
\end{Lemma}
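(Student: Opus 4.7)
The proof proceeds in two stages. Stage one establishes the uniform a-priori bound \eqref{eq:olDelta} via strong induction on $k$; stage two substitutes this bound back into the recursion \eqref{eq:lem5} to extract the sharper ${\cal O}(\gamma_k)$ fluctuation rate \eqref{eq:bdDelta}.

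For part {\sf (i)}, the plan is strong induction. The base case $\Delta_0 \leq 3\Delta_0 \leq \overline{\Delta}$ is immediate from the definition of $\overline{\Delta}$. For the inductive step, I would assume $\Delta_j \leq \overline{\Delta}$ for every $j \leq k-1$ and bound each of the five summands on the right-hand side of \eqref{eq:lem5}. The term $G_{1:k}\Delta_0$ is at most $\Delta_0$. The potentially dangerous self-referential term $3\gamma_k \LZ \Delta_{k-1}$ is tamed by the step size bound $\gamma_k \leq 1/(6\LZ)$ in \eqref{eq:stepsize_cond}, which yields $3\gamma_k \LZ \Delta_{k-1} \leq \Delta_{k-1}/2 \leq \overline{\Delta}/2$. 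Under the induction hypothesis, the cross-sum $\sum_{s=1}^{k-1} \gamma_{s+1}^2 G_{s+2:k}(\Ctwo \Delta_s + \Cthree \Delta_{s-1})$ is majorised by $(\Ctwo + \Cthree)\overline{\Delta} \sum_{s=1}^{k-1}\gamma_{s+1}^2 G_{s+2:k}$; the auxiliary summation lemma already used to pass from \eqref{eq:inter} to \eqref{eq:lem5} renders this sum a bounded multiple of $\gamma_k$, and the step size constraints $\gamma_k \leq \tilde{\mu}/(2\Ctwo)$ and $\gamma_k \leq \min\{\tilde{\mu}/3, 3\LZ\}/(\Cthree + 3\LZ\tilde{\mu})$ are calibrated precisely so that its contribution fits within the remaining budget. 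The residual $\bigl(\frac{2}{\tilde{\mu}}(2\sigma^2 + \Cone) + \LZ\bigr)\gamma_k$ together with the initial factor $\gamma_1 G_{2:k}\bigl\{\LZ(1 + 3\Delta_0) + \gamma_1(2\sigma^2 + \Cone)\bigr\}$ contributes the $(2\sigma^2 + \Cone)$- and $\LZ\tilde{\mu}$-dependent pieces of $\overline{\Delta}$. Summing all five contributions, each crafted to occupy a prescribed fraction of $\overline{\Delta}$, yields $\Delta_k \leq \overline{\Delta}$ and closes the induction.

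For part {\sf (ii)}, with the uniform bound $\Delta_s \leq \overline{\Delta}$ in hand, I would return to \eqref{eq:lem5} and replace $\Delta_s, \Delta_{s-1}$ inside the cross-sum by $\overline{\Delta}$. This produces
\[
\Delta_k \leq G_{1:k}\Delta_0 + \Bigl(\tfrac{2}{\tilde{\mu}}(2\sigma^2 + \Cone) + \LZ\Bigr)\gamma_k + (\Ctwo + \Cthree)\overline{\Delta}\sum_{s=1}^{k-1}\gamma_{s+1}^2 G_{s+2:k} + \gamma_1 G_{2:k}\bigl\{\LZ(1+3\Delta_0) + \gamma_1(2\sigma^2 + \Cone)\bigr\} + 3\LZ\overline{\Delta}\gamma_k.
\]
The same auxiliary summation lemma, whose hypothesis is exactly the first step size condition $\gamma_{k-1}/\gamma_k \leq 1 + \gamma_k\tilde{\mu}/4$, bounds $\sum_{s=1}^{k-1}\gamma_{s+1}^2 G_{s+2:k}$ by a constant multiple of $\gamma_k$ of order $\varsigma/\tilde{\mu}$. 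The initial factor $\gamma_1 G_{2:k}\{\cdots\}$ decays at the transient rate $G_{2:k}$ and is absorbed into the same $\gamma_k$-scaled bucket using monotonicity of $\gamma_k$. Using $G_{1:k} \leq \prod_{i=1}^k (1 - \gamma_i\tilde{\mu}/2)$ on the leading term and collecting the four ${\cal O}(\gamma_k)$ contributions yields exactly the scalar $\mathds{C}$ in the statement of \Cref{th:main}, delivering \eqref{eq:bdDelta}.

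The main obstacle lies in part {\sf (i)}: the recursion \eqref{eq:lem5} does not exhibit the standard contraction-plus-noise form that would allow a direct Gronwall argument, because of the cross-term $3\gamma_k \LZ \Delta_{k-1}$ whose coefficient does not decay with $k$. One cannot a priori bound $\Delta_k$ by a multiple of $\gamma_k$ without first knowing $\Delta_{k-1}$ is bounded. Induction breaks this circularity, but only if $\overline{\Delta}$ is chosen large enough that every piece of the right-hand side fits inside it — which is why \eqref{eq:olDelta} takes the specific form proportional to $\Delta_0$ plus $(2(2\sigma^2 + \Cone) + \tilde{\mu}\LZ)/\LZ^2$, and why all four step size bounds in \eqref{eq:stepsize_cond} must be used in tandem. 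Once {\sf (i)} is in place, {\sf (ii)} is essentially a mechanical second application of the same summation estimate.
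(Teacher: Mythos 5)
Your part {\sf (ii)} is essentially the paper's argument (substitute the uniform bound, apply the summation lemma, absorb $\gamma_1 G_{2:k}$ into ${\cal O}(\gamma_k)$), and it goes through once {\sf (i)} is available. The divergence is in part {\sf (i)}, where the paper does \emph{not} argue by induction on \eqref{eq:lem5}. It first saturates \eqref{eq:lem5} into a sequence ${\rm U}_k$, subtracts $(1-\gamma_k\tilde{\mu}){\rm U}_{k-1}$ to obtain a one-step recursion in which the injected noise per step is only $\big(\tfrac{2}{\tilde{\mu}}(2\sigma^2+\Cone)+\LZ\big)\tilde{\mu}\varsigma\gamma_k^2={\cal O}(\gamma_k^2)$ and the memory term appears as an exact difference $3\LZ(\gamma_k{\rm U}_{k-1}-\gamma_{k-1}{\rm U}_{k-2})$; summing over $k$ telescopes that difference to $3\LZ\gamma_t{\rm U}_{t-1}\le\tfrac12{\rm U}_{t-1}$, giving a genuine contraction in $t$ whose solution accumulates $\sum_k\gamma_k^2\lesssim (6\LZ)^{-2}$ with geometric weights. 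That accumulation of \emph{squared} step sizes is exactly where the factor $\varsigma/(9\LZ^2)$ in $\overline{\Delta}$ comes from.

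Your induction cannot reproduce this, and this is a genuine gap rather than a stylistic difference. Working termwise on \eqref{eq:lem5}, after you spend $\Delta_0$ on $G_{1:k}\Delta_0$ and $\tfrac12\overline{\Delta}$ on $3\gamma_k\LZ\Delta_{k-1}$ (your correct use of $\gamma_k\le 1/(6\LZ)$), the remaining room inside $\overline{\Delta}$ is $\tfrac12\Delta_0+\tfrac{\varsigma}{18\LZ^2}\big(2(2\sigma^2+\Cone)+\tilde{\mu}\LZ\big)$, i.e.\ of order $\LZ^{-2}$ in the relevant constants. But the inhomogeneous term of \eqref{eq:lem5} is $\big(\tfrac{2}{\tilde{\mu}}(2\sigma^2+\Cone)+\LZ\big)\gamma_k$, which is ${\cal O}(\gamma_k)$, and the conditions \eqref{eq:stepsize_cond} only force $\gamma_k\lesssim\min\{1/\LZ,\tilde{\mu}/\LZ\}$; this single term therefore overshoots its budget slot by a factor of order $\LZ$ (compare, e.g., $\tfrac{2}{\tilde{\mu}}(2\sigma^2+\Cone)\gamma_k\sim(2\sigma^2+\Cone)/\LZ$ against the available $\varsigma(2\sigma^2+\Cone)/(9\LZ^2)$). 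The same mismatch afflicts the cross-sum, for which Lemma~\ref{lem:o_gamma_k} only yields $(\Ctwo+\Cthree)\overline{\Delta}\cdot{\cal O}(\gamma_k/\tilde{\mu})$, again too large by a factor of order $\LZ$ relative to $\tfrac12\Delta_0$. So your claim that the step sizes are ``calibrated precisely'' for the inductive budget does not hold: they are calibrated for the telescoped, ${\cal O}(\gamma_k^2)$-per-step form. To close an induction on \eqref{eq:lem5} itself you would need either to inflate $\overline{\Delta}$ by a factor of order $\LZ$ (changing the statement) or to impose $\gamma_k\lesssim\tilde{\mu}/\LZ^2$ (strengthening \eqref{eq:stepsize_cond}); the missing idea is the conversion to the one-step recursion and the exact telescoping of the $3\LZ$ difference term.
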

Proving the above lemma requires one to establish the stability of the system \eqref{eq:lem5}, which demands a sufficiently small $\gamma_k$ to control the remainder term $3 \LZ \gamma_k \Delta_{k-1}$. Our analysis relies on the special structure of this inequality system; see the proof details in Appendix~\ref{app:bdd}. 
The convergence bound \eqref{eq:bdDelta} follows from the boundedness of $\Delta_k$. 
Finally, we obtain \Cref{th:main} by applying \Cref{lem:bdd}.

\section{Numerical Experiments} \label{sec:num}
This section considers two performative prediction problems to corroborate with our theories. All the experiments are performed with Python on a server using a single thread of an Intel Xeon 6138 CPU. Further details about the experiments below can be found in Appendix~\ref{app:details}.

\textbf{Gaussian Mean Estimation}~~
The \emph{first problem} is concerned with Gaussian mean estimation using synthetic data. Our aim is to validate Theorem~\ref{th:main} using a simple experiment. Here, \eqref{eq:performative} is specified as $\min_{ \theta \in \RR } \EE_{ z \sim {\cal D}(\theta)} [ (z-\theta)^2/2 ]$ with ${\cal D}(\theta) \equiv {\cal N}( \bar{z} + \epsilon \theta ; \sigma^2 )$. For $0<\epsilon<1$, the performative stable solution has a closed form $\theta_{PS} = \frac{ \bar{z} }{ 1 - \epsilon }$. For the state-dependent SA, the \emph{agent} follows an autoregressive (AR) model $z_{k+1} = (1 - \rho) z_k + \rho \tilde{z}_{k+1}$ with independent $\tilde{z}_{k+1} \sim {\cal N}( \bar{z} + \epsilon \theta_k; \sigma^2 )$ and regression parameter $\rho \in (0,1)$. This AR recursion is a controlled MC with a stationary distribution that yields the unbiased gradient of \eqref{eq:performative}, details about the MC are in Appendix~\ref{app:gau}.

We consider a large variance setting with $\bar{z} = 10$, $\sigma = 50$, $\epsilon = 0.1$. The step size is $\gamma_k = \frac{c_0}{c_1+k}$, $c_0 = \frac{500}{\tilde{\mu}}, c_1 = \frac{800}{\tilde{\mu}^2}$. In Fig.~\ref{fig:main_sim} (left), we compare $|\theta_k - \theta_{PS}|^2$ against the iteration number $k$ for the Gaussian estimation problem using our state-dependent SA and greedy deploy \citep{mendler2020stochastic} algorithms. As observed, both algorithms have an asymptotic convergence rate of ${\cal O}(1/k)$ towards $\theta_{PS}$ which is predicted by \Cref{th:main}. As $\rho \downarrow 0$, the state-dependent SA algorithm delivers a smaller error as the AR model has a stationary distribution with lower variance. 

\begin{figure}
    \centering
    \includegraphics[width=.33\linewidth]{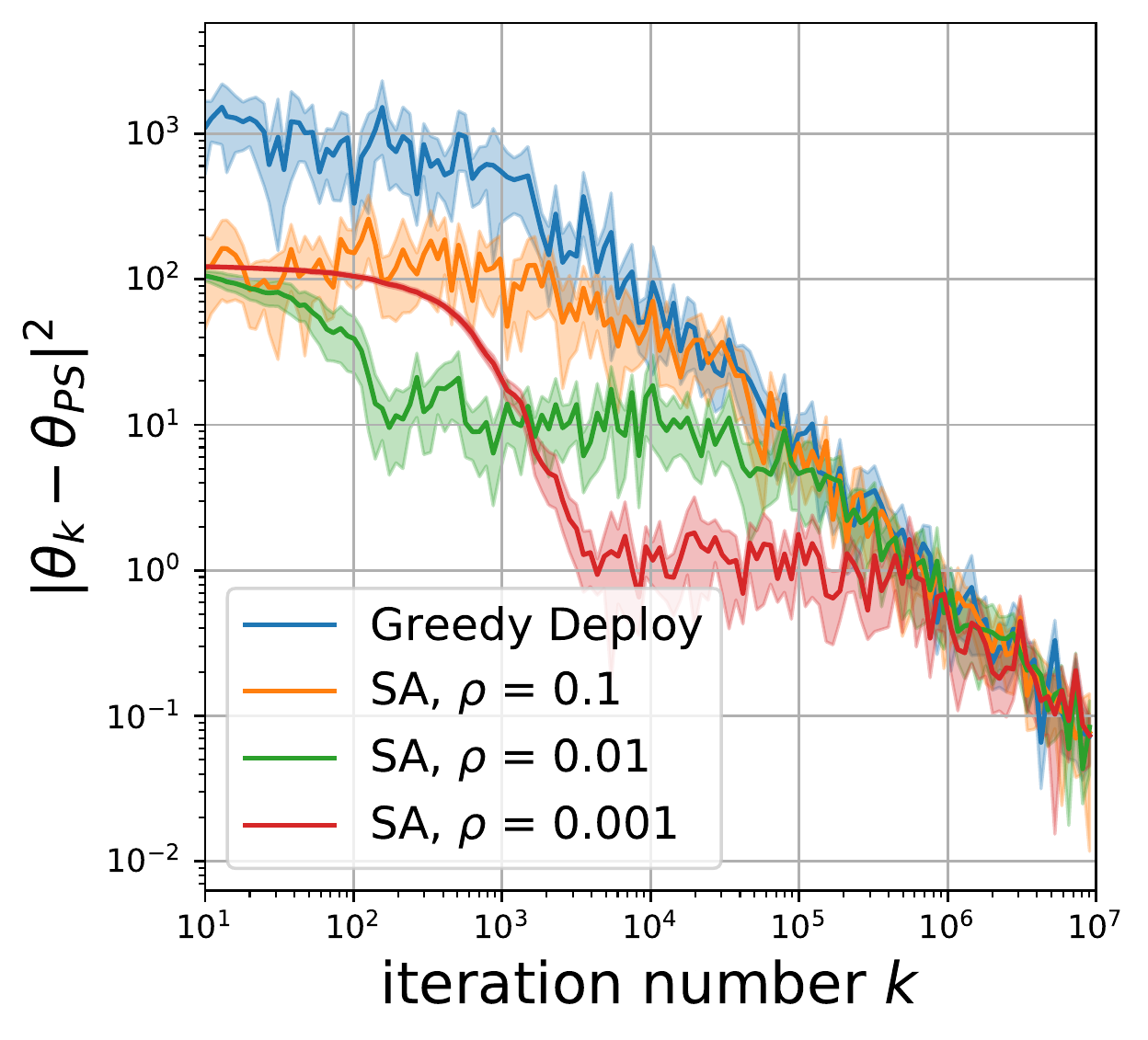}~\includegraphics[width=.33\linewidth]{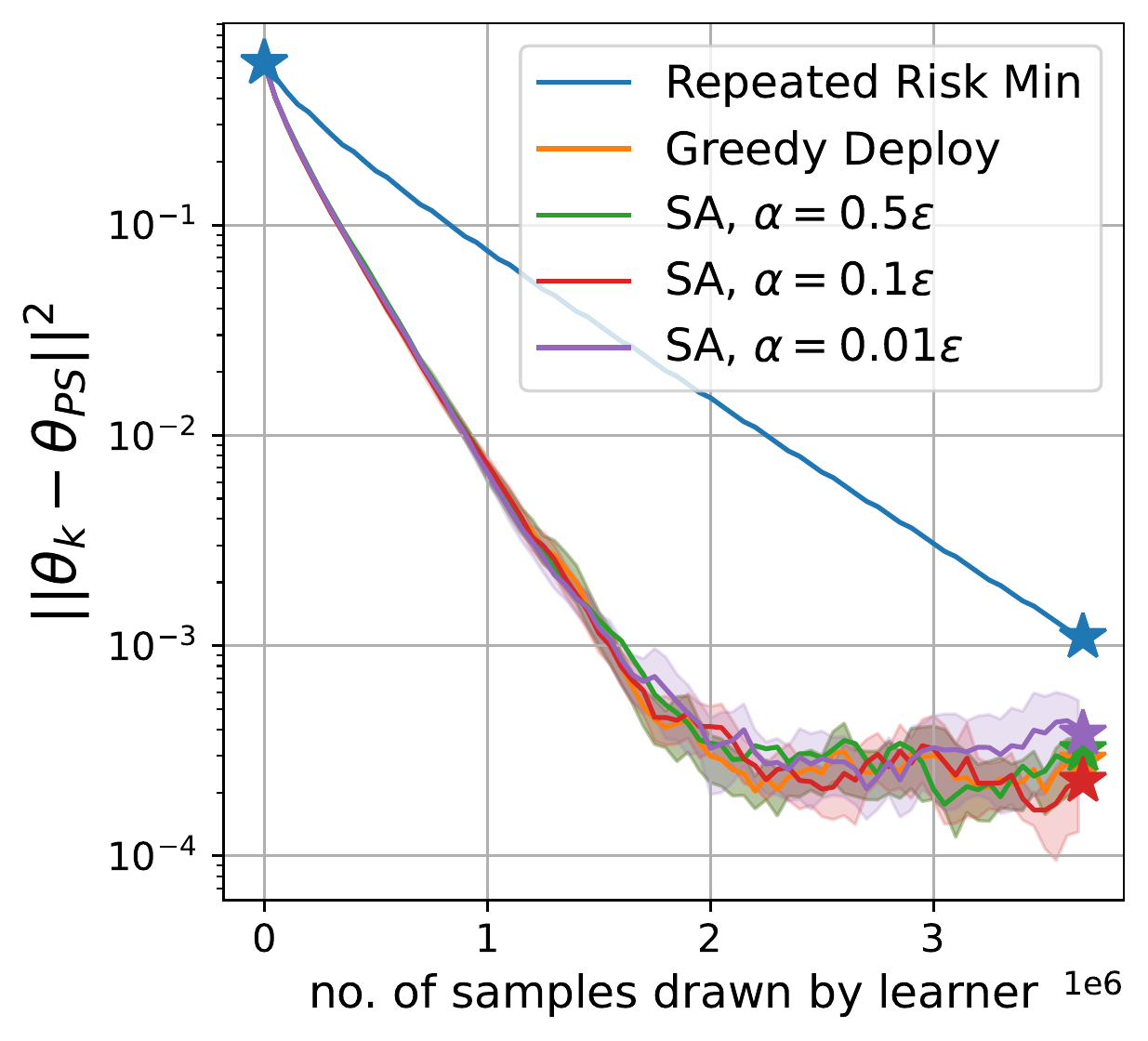}~\includegraphics[width=.33\linewidth]{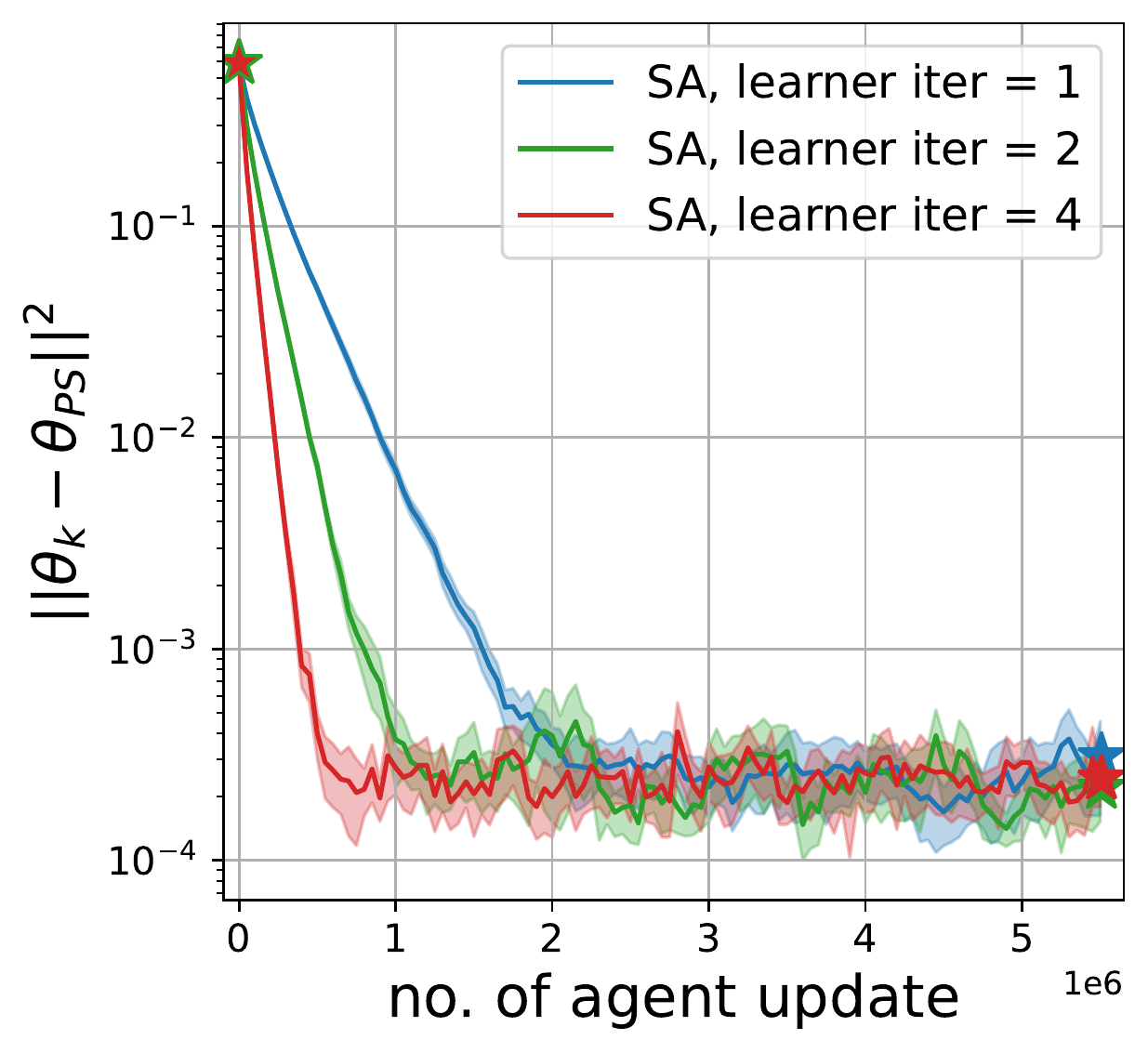}\vspace{-.6cm}
    \caption{\textbf{Gaussian mean estimation} -- \emph{(Left)} Under different regression parameter $\rho$; \textbf{Strategic Classification} -- \emph{(Middle)} Under Linear BR $U_{\sf q}(\cdot)$ and different agent response rate $\alpha$ [cf.~\eqref{eq:ibr}]; \emph{(Right)} Under Logistics BR $U_{\sf lg}(\cdot)$. The shaded region shows the 90\% confidence interval over 20 trials.}\vspace{-.2cm}
    \label{fig:main_sim}
\end{figure}

\textbf{Strategic Classification}~~
The \emph{second problem} is a strategic classification (SC) problem similar to  \cite{perdomo2020performative} 
for a credit scoring classifier with \texttt{GiveMeSomeCredit} dataset\footnote{Available at \url{https://www.kaggle.com/c/GiveMeSomeCredit/data}.}.
Our aim is to showcase the effects when the agent adapts slowly to the shifted distribution. In particular, our theory implies that while the algorithm will still converge to $\theta_{PS}$, a slower convergence rate will be observed. To specify \eqref{eq:performative}, let $z \equiv (x,y)$ where $x \in \RR^d$ is feature vector, $y \in \{0,1\}$ is label. The \emph{learner} finds $\theta \in \RR^d$ that minimizes:
\beq \textstyle \label{eq:obj_cred}
\EE_{z \sim {\cal D}(\theta)}[ \ell( \theta; z) ], ~~\text{where}~~ \ell( \theta; z ) = \frac{\beta}{2} \| \theta \|^2 + \log( 1 + \exp( \pscal{ \theta }{x} ) ) - y \pscal{\theta}{x}.
\eeq
Observe that when $\beta > 0$, $\ell(\theta;z)$ is a $\beta$-strongly convex function w.r.t.~$\theta$ satisfying \Cref{ass:strong}. 
For any $\theta \in \RR^d$, the shifted data distribution ${\cal D}(\theta)$ is obtained through evaluating the best response (BR) in \eqref{eq:br} of \textbf{Example~\ref{ex:br}}. We consider two types of utility functions adopted by the \emph{agents}:
\beq \notag \textstyle
    U_{\sf q}(z^{\prime};{z}, \theta) = \pscal{\theta}{x'}- \frac{\|x'-x\|^{2}}{2\epsilon},~~ U_{\sf lg}(z^{\prime};{z}, \theta) = y \pscal{\theta}{x'} - \log\left(1+\exp(\pscal{\theta}{x'})\right)- \frac{\|x'-x\|^{2}}{2\epsilon},
\eeq
where $z \equiv (x,y)$ is the original unshifted data. The label $y \in \{0,1\}$ is unchanged in the BR. Notice that $U_{\sf q}(\cdot)$, $U_{\sf lg}(\cdot)$ have respectively linear and logistics costs. Both utility functions include a quadratic regularizer where $\epsilon$ controls the sensitivity of the distribution shift [cf.~\Cref{ass:sensitive}].

With a published $\theta_k$, the agent(s) maximize the utility function prior to giving data to the learner for the next round. 
For both $U_{\sf q}(\cdot)$ and $U_{\sf lg}(\cdot)$, the BR obtained steers the classifier in favor of the agent(s).
Furthermore,  $U_{\sf lg}(\cdot)$ is motivated by logistic regression which favors towards samples with label `1'. 

For details of the numerical experiments, we set $\beta = 1000/m$ in \eqref{eq:obj_cred}, $\epsilon = 0.01$ in the utility functions, and  in \eqref{eq:ibr}, we set number of selected agents as $|{\cal I}_k| = 5$, agents' response rate (stepsize) as $\alpha = 0.5 \epsilon$ unless otherwise specified. 
The step size for \eqref{eq:sa} is $\gamma_k = {c_0} / {(c_1+k)}$, $c_0 = {100} / {\tilde{\mu}}, c_1 = { 8 L^2 } / {\tilde{\mu}^2}$. 

We first consider when $\theta_{PS}$ is computed with ${\cal D}(\theta)$ defined by the linear BR function $U_{\sf q}(\cdot)$ and compare our state-dependent SA with greedy deploy scheme \citep{mendler2020stochastic} and repeated risk minimization \citep{perdomo2020performative}.  As shown in Fig.~\ref{fig:main_sim} (middle), all algorithms converge to $\theta_{PS}$. As $\alpha \downarrow 0$, the state-dependent SA converges at slightly slower rates as the agents adapt to the distribution shift with increased mixing time of the MC.
The result corroborates with \Cref{th:main} which established the ${\cal O}(1/k)$ convergence rate with state-dependent SA.

Our last experiment pertains to the same SC problem as before, but we consider a different setting where $\theta_{PS}$ is computed  with ${\cal D}(\theta)$ defined by the logistics BR function $U_{\sf lg}(\cdot)$.
The agents follow a more complicated dynamics since the BR does not admit a closed form solution. Again, we aim to confirm Theorem~\ref{th:main} that Algorithm~1 converges under the stateful agent setting. We compare the distance $\| \theta_k - \theta_{PS}\|^2$ versus iteration $k$. 
In addition to showing the convergence result as predicted in Theorem~\ref{th:main}, we aim to observe the effects when the learner adopts a lazy deployment scheme [cf.~\cite{mendler2020stochastic}], by varying the number of $\theta$-update by the \emph{learner} per the agents' update. 
Fig.~\ref{fig:main_sim} (right) shows the error $\| \theta_k - \theta_{PS} \|^2$ against the number of adaptation steps performed at the agents via \eqref{eq:ibr} as we illustrate the convergence rate from the perspectives of the agents.
We observe that the error decreases at a faster rate when the number of learner's iteration increases. More details about our numerical experiments can be found in Appendix~\ref{app:details}.

\textbf{Conclusions \& Limitations}~~~ We consider a state-dependent SA algorithm for performative prediction. We showed a convergence rate of ${\cal O}(1/k)$ in mean-squared error towards the performative stable solution when the agents provide data drawn from a controlled MC. Our study paved the first step towards understanding and applying performative prediction in a dynamical setting.  

There are several limitations. First, despite being a reasonable fixed point, $\theta_{PS}$ can be different from the optimal solution to \eqref{eq:performative}. An open problem is whether the state dependent SA converges when gradient correction, e.g., \citep{izzo2021learn}, is used. Second, our convergence analysis of state-dependent SA algorithms relies on smoothness conditions on the controlled MC and distributions [cf.~\Cref{ass:poisson_bound}, \ref{ass:sensitive}]. An open problem is to verify if these are necessary for our results to hold.

\bibliographystyle{abbrvnat}
\bibliography{main}

\appendix 
\newpage
\section{Supplementary Information for Section~\ref{sec:main}}

\subsection{Example on Gaussian Estimation (A Case where $\pi_{\theta}(\cdot) \neq {\cal D}(\theta)$)}\label{app:gau}
Consider the following instance of \eqref{eq:performative} with:
\beq \label{eq:gau}
\min_{ \theta \in \RR }~ \EE_{ z \sim {\cal D}(\theta) } [ ( z - \theta)^2 / 2 ] \quad \text{where} \quad {\cal D}(\theta) \equiv {\cal N}( \bar{z} + \epsilon \theta ; \sigma^2 ).
\eeq
Following \eqref{eq:sa}, the state-dependent SA algorithm reads
\beq \label{eq:sa_gau}
\theta_{k+1} = \theta_k - \gamma_{k+1} \grd \ell(\theta_k; z_{k+1}) = \theta_k - \gamma_{k+1} ( \theta_k - z_{k+1} ).
\eeq
where the sequence $\{ z_k \}_{k \geq 1}$ is generated by an autoregressive (AR) model, with $\rho \in (0,1]$,
\beq \label{eq:ar}
z_{k+1} = (1 - \rho) z_k + \rho \tilde{z}_{k+1} \quad \text{where} \quad \tilde{z}_{k+1} \sim {\cal D}( \theta_k ) = {\cal N}( \bar{z} + \epsilon \theta_k ; \sigma^2 ),
\eeq
such that the draw of $\tilde{z}_{k+1}$ are independent.
We show that the algorithm \eqref{eq:sa_gau}, \eqref{eq:ar} can be analyzed as a state-dependent SA \eqref{eq:sa}, \eqref{eq:mkv_setting} considered in our framework. Precisely, we show that the controlled MC in \eqref{eq:ar} admits a stationary distribution $\pi_{\theta_k}(\cdot)$ such that $\EE_{z \sim \pi_{\theta_k}(\cdot)} [ \grd \ell(\theta; z) ] = \EE_{z' \sim {\cal D}(\theta_k)} [ \grd \ell( \theta ; z' )]$.

Observe that \eqref{eq:ar} defines a controlled MC with a transition kernel denoted by $\MK_{\theta_k}: \RR \times \RR \rightarrow \RR_+$ at the $k$th iteration, as in \eqref{eq:mkv_setting}. For every $\theta \in \RR$, $z \in \RR$, the kernel $\MK_{\theta}$ has a unique stationary distribution given by
\beq \label{eq:gau_stat}
\lim_{ n \rightarrow \infty } \MK_{\theta}^n (z, \cdot ) = \pi_{\theta}(\cdot) \equiv {\cal N} \big( \bar{z} + \epsilon \theta \, ; \, \frac{\rho}{2-\rho} \sigma^2 \big).
\eeq
Notice that the above is different from the distribution ${\cal D}(\theta)$ desired in \eqref{eq:gau} unless $\rho = 1$. In the latter case, the AR model \eqref{eq:ar} reduces to drawing i.i.d.~samples from ${\cal D}(\theta)$. For general $\rho < 1$, it still satisfies the asymptotically unbiasedness of the stochastic gradient estimate. In particular,
\beq
\EE_{ z \sim \pi_{\theta}(\cdot) } [ \grd \ell( \theta ; z ) ] = \EE_{ z \sim \pi_{\theta}(\cdot) } [ \theta - z ] = \EE_{ z \sim {\cal D}(\theta) } [ \grd \ell( \theta ; z ) ] .
\eeq
The key observation is that for this particular performative prediction problem \eqref{eq:gau}, the gradient of the loss function is \emph{linear} in the sample $z$. As such, with \eqref{eq:gau_stat} yielding a stationary distribution that has the same mean as ${\cal D}(\theta)$, the asymptotic unbiasedness property is unaffected.
In fact, the stationary distribution in \eqref{eq:gau_stat} has a reduced variance compared to ${\cal D}(\theta)$. Therefore, we expect the estimation error of $\theta_{PS}$ to be more stable using \eqref{eq:sa_gau}, \eqref{eq:ar} than \citep{mendler2020stochastic}.  

\subsection{Details on Example~\ref{ex:br} for Adapted Best Response}\label{app:br}
We continue the discussions in the paper with the procedure \eqref{eq:ibr}.
When $\theta \in \RR^d$ is fixed, the procedure in \eqref{eq:ibr} is modelled as a Markov Chain (MC) with unique stationary distribution that corresponds to the best response distribution ${\cal D}(\theta)$ described in \eqref{eq:br}. 

To this end, we model the state of the MC by the tuple $\hat{z} \equiv ( d_1, ..., d_m, z )$. Consider the state space given by ${\sf Z}^{m+1}$ and
denote $\MK_{\theta} : {\sf Z}^{m+1} \times {\cal Z}^{m+1} \rightarrow \RR_+$ as the Markov transition kernel.
We remark that there is a slight abuse of notation here as the stochastic gradient $\grd \ell( \theta; \hat{z} )$ used by the learner depends only on the last term, $z$, in the agents' state variable $\hat{z}$. We have decided to use the current notation in the main paper to avoid introducing complicated notation for the implementation focused readers. Nevertheless, the SC example fits our proposed model.

Turning back on the MC. Observe when the current state is $\hat{z}$, under the action of kernel $\MK_{\theta}$, by following the description in \eqref{eq:ibr}, we obtain the next state $\hat{z}' = ( d_1', ..., d_m', z' )$ as 
\beq \label{eq:mc_upd}
d_j' = d_j + \alpha \mathds{1}_{ j \in {\cal I} }\grd U( d_j; \bar{d}_j, \theta ),~j=1,...,m, \quad z' = d_i',
\eeq
with probability 
\[ 
	\frac{1}{ { m \choose pm } } \times \frac{1}{m} ,
\]
for any ${\cal I} \subseteq \{1,...,m\}$, $|{\cal I}| = pm$ and $i \in \{1,...,m \}$.

At each transition, the data points $\{ d_1,..., d_m \}$ are updated by the first equation in \eqref{eq:mc_upd}. The latter can be treated as a special case of the \emph{random block coordinate gradient descent (RBCD)} algorithm to the \emph{separable} problem:
\beq \label{eq:rbcd}
\max_{ d_i , i =1,...,m}~\sum_{i=1}^m U( d_i ; \bar{d}_i, \theta ).
\eeq
Note that the optimal solution to the above, $\{ d_1^\star, ..., d_m^\star \}$, is a set of data points that forms the empirical distribution ${\cal D}(\theta)$.
Furthermore, it is known that the RBCD algorithm converges linearly with high probability and almost surely to the optimal solution for strongly concave maximization; see \cite{richtarik2014iteration, patrascu2015efficient}. 

With the above observations, the MC induced by $\MK_\theta$ has a stationary distribution $\pi_\theta (\cdot)$ where for any measurable function $f : {\sf Z}^{m+1} \rightarrow \RR^n$, it holds
\beq
\lim_{k \rightarrow \infty} \MK_\theta^k f( d_1,..., d_m, z ) = \frac{1}{m} \sum_{i=1}^m f( d_1^\star, ..., d_m^\star, d_i^\star ) = \EE_{z' \sim {\cal D}(\theta)} [ f( d_1^\star, ..., d_m^\star , z') ],
\eeq
for any initial state $\hat{z} = ( d_1, ..., d_m, z )$. The above identity can be derived from the fact that the RBCD algorithm converges almost surely to the optimal solution to \eqref{eq:rbcd} and the random variable $z^k$ is uniformly drawn from $\{ d_1^k, ..., d_m^k \}$. 
Furthermore, for any $L$-Lipschitz continuous $f$, it holds
\beq
\begin{split}
& \Big\| \MK_\theta^k f( d_1,..., d_m, z ) - \EE_{z' \sim {\cal D}(\theta)} [ f( d_1^\star, ..., d_m^\star , z') ] \Big\| \\
& \overset{(a)}{\leq} \frac{1}{m} \sum_{i=1}^m \| \EE[ f( d_1^k, ..., d_m^k, d_i^k )] - f( d_1^\star, ..., d_m^\star, d_i^\star ) \| \\
& \overset{(b)}{\leq} L \big(1 + \frac{1}{m} \big) \sum_{i=1}^m \EE[ \| d_i^k - d_i^\star \| ] \leq \overline{\rm C} \rho^k,
\end{split}
\eeq
where (a) uses $\MK_\theta^k f( d_1,..., d_m, z ) = \sum_{i=1}^m \EE[ f( d_1^k , ..., d_m^k , d_i^k )] / m$ and the expectation is taken with respect to the random subset selection of ${\cal I}_k$ in \eqref{eq:ibr}. In the expression that follows (b), the constants $\overline{\rm C}$, $\rho \in [0,1)$ depend on the initial value $\hat{z}$ and the strong concavity property of $U(\cdot)$. The above property is important for establishing the existence of the solution to Poisson equation in \Cref{ass:poisson}.  

\section{Convergence Analysis with Non-convex Loss Function} \label{app:noncvx}
We first verify the inequality \eqref{eq:biased} by observing the following expression for the gradient of performative loss:
\beq \label{eq:vgrd}
\grd V(\theta) = \grd \int_{\sf Z} \ell( \theta; z) p_{ {\cal D}(\theta) }(z) {\rm d}\!~z = \EE_{ z \sim {\cal D}(\theta) } [ \grd \ell(\theta; z) ] + \EE_{z \sim {\cal D}(\theta)} \big[ \ell(\theta;z) \grd_\theta \log( p_{ {\cal D}(\theta) }( z) ) \big],
\eeq
where we have denoted $p_{ {\cal D}(\theta) }(z)$ as the probability distribution function for ${\cal D}(\theta)$. The above identity is derived using chain rule and the property $\grd_\theta \log p_{ {\cal D}(\theta) } (z) = \frac{ \grd_\theta p_{ {\cal D}(\theta) } (z) }{ p_{ {\cal D}(\theta) } (z) }$ similar to the policy gradient theorem; see \citep[Ch.~13]{sutton2018reinforcement}.

Observe that
\beq
\pscal{ \grd V( \theta ) }{ h (\theta) } = \| h(\theta) \|^2 + \pscal{ \EE_{z \sim {\cal D}(\theta)} \big[ \ell(\theta;z) \grd_\theta \log( p_{ {\cal D}(\theta) }( z) ) \big] }{ h(\theta) }
\eeq
We note 
\beq \notag
\begin{split}
& | \pscal{ \EE_{z \sim {\cal D}(\theta)} \big[ \ell(\theta;z) \grd_\theta \log( p_{ {\cal D}(\theta) }( z) ) \big] }{ h(\theta) }| \leq \frac{1}{2} \| h(\theta)\|^2 + \frac{1}{2} \| \EE_{z \sim {\cal D}(\theta)} \big[ \ell(\theta;z) \grd_\theta \log( p_{ {\cal D}(\theta) }( z) ) \big] \|^2 \\
& \leq \frac{1}{2} \| h(\theta)\|^2 + \frac{1}{2} \EE_{z \sim {\cal D}(\theta)} \big[ |\ell(\theta;z)|^2 \| \grd_\theta \log( p_{ {\cal D}(\theta) }( z) ) \|^2 \big] \leq \frac{1}{2} \| h(\theta) \|^2 + {\rm c}_0
\end{split}
\eeq
where we have used the Jensen's inequality and set 
\beq\label{app:c_0}
{\rm c}_0 := \sup_{ \theta \in \RR^d } \frac{1}{2} \EE_{z \sim {\cal D}(\theta)} \big[ |\ell(\theta;z)|^2 \| \grd_\theta \log( p_{ {\cal D}(\theta) }( z) ) \|^2 \big].
\eeq
The above can be shown to be bounded when the loss function is bounded (e.g., a sigmoid loss), and the state-dependent distribution has bounded gradient w.r.t.~$\theta$ (e.g., a soft-max distribution). 
Together, we obtain the desired inequality:
\beq \label{eq:a1}
\pscal{ \grd V( \theta ) }{ h (\theta) } \geq \frac{1}{2} \| h(\theta) \|^2 - {\rm c}_0,~\forall~\theta \in \RR^d.
\eeq
Notice that \eqref{eq:vgrd} also implies
\beq \label{eq:a2}
\begin{split}
\| \grd V( \theta ) \| & \leq \| h( \theta ) \| + \| \EE_{z \sim {\cal D}(\theta)} \big[ \ell(\theta;z) \grd_\theta \log( p_{ {\cal D}(\theta) }( z) ) \big] \| \\
& \leq \| h( \theta ) \| + \EE_{z \sim {\cal D}(\theta)} \big[ |\ell(\theta;z)| \| \grd_\theta \log( p_{ {\cal D}(\theta) }( z) ) \| \big] \leq \| h( \theta ) \| + \sqrt{2 {\rm c}_0}.
\end{split}
\eeq

\paragraph{Proof of Corollary~\ref{cor:ncvx}} Notice that \eqref{eq:a1}, \eqref{eq:a2} imply A1, A2 of \citep{karimi2019non}, respectively. Moreover, the stated assumptions in the corollary imply A3, A5-A7 of \citep{karimi2019non}. Applying Theorem~2 from \citep{karimi2019non} shows that
\beq
\EE[ \| \grd V( \theta_{ \sf K } ) \|^2 ] \lesssim \EE[ \| h( \theta_{ \sf K } ) \|^2 ] + {\rm c}_0 \lesssim \frac{1 + \sum_{k=1}^K \gamma_k^2 }{ \sum_{k=1}^K \gamma_k } + {\rm c}_0,
\eeq
where we have omitted the constants from \citep{karimi2019non}. Note that ${\sf K} \in \{1,...,K\}$ is a discrete r.v.~selected independently with the probability $\PP ( {\sf K} = k ) = \gamma_k / \sum_{j=1}^K \gamma_j$.
Setting the step sizes as $\gamma_k = {\cal O}(1/\sqrt{k})$ shows the desired bound in the corollary.

\section{Missing Proofs in Section~\ref{sec:pf}}\label{app:miss}
Below, we present the detailed proof for the lemmas presented in \S\ref{sec:pf}.

\subsection{Proof of Lemma~\ref{lem:square}} \label{app:square}
We begin our analysis by observing that as $\grd f(\theta_{PS}; \theta_{PS} ) = 0$, we have:
\begin{align*}
& \left\|\theta_{k+1}-\theta_{PS}\right\|^{2} = \left\|\theta_{k}-\gamma_{k+1} \nabla \ell( \theta_{k}; z_{k+1} )-\theta_{PS}\right\|^{2} \\
& = \underbrace{ \left\|\theta_{k}-\theta_{PS}\right\|^{2}}_{ =: B_1 } - \, 2 \gamma_{k+1} \underbrace{ \pscal{ \theta_k -\theta_{PS} }{\nabla \ell( \theta_{k}; z_{k+1} ) - \nabla f(\theta_{PS}; \theta_{PS}) } }_{=:B_2} \\
& \qquad + \gamma_{k+1}^2 \underbrace{ \left\|  \nabla f(\theta_{PS}; \theta_{PS})- \nabla \ell ( \theta_{k}; z_{k+1} ) \right\|^{2} }_{ =: B_3 }
\end{align*}
The inner product can be lower bounded as
\begin{eqnarray}
\begin{aligned}
	B_{2}&= \pscal{ \theta_k -\theta_{PS}}{ \nabla \ell ( \theta_{k}; z_{k+1} ) -\nabla f(\theta_{PS}; \theta_{PS}) }\\
	&= \pscal{  \theta_k -\theta_{PS} } { \nabla \ell ( \theta_{k}; z_{k+1} ) -\nabla f(\theta_{k}; \theta_{k}) } + \pscal { \theta_k -\theta_{PS} }{\nabla  f(\theta_{k};\theta_{k})-\nabla f(\theta_{k}; \theta_{PS}) } \\
	&\quad + \pscal{ \theta_k -\theta_{PS} }{ \nabla f(\theta_{k};\theta_{PS})-\nabla f(\theta_{PS}; \theta_{PS}) }\\
	&\overset{(a)}{\geq} \pscal{ \theta_k -\theta_{PS}} {\nabla \ell( \theta_{k}; z_{k+1} )-\nabla f(\theta_{k}; \theta_{k}) } \\
	& \quad - \left\|\theta_{k}-\theta_{PS} \right\| \left\|\nabla f(\theta_{k}; \theta_{k})- \nabla f(\theta_{k}; \theta_{PS}) \right\|+ \mu \left\|\theta_{k}-\theta_{PS} \right\|^{2}\\
	&\overset{(b)}{\geq} \left<\theta_k -\theta_{PS}|\nabla \ell( \theta_{k}; z_{k+1} ) -\nabla f(\theta_{k}; \theta_{k})\right> +(\mu-L\varepsilon) \left\|\theta_{k}-\theta_{PS} \right\|^{2}
\end{aligned}
\end{eqnarray}
where (a) is due to the Cauchy-schwarz inequality and the $\mu$-strong convexity of $\nabla f(\cdot; \cdot)$; (b) is due to the $L$-smoothness of $f$ and the $\epsilon$-sensitivity of the distribution [c.f~Assumption~\ref{ass:sensitive}]; also see \cite{perdomo2020performative}. Furthermore, 
\begin{eqnarray}
\begin{aligned}
B_{3}&= \left\|  \nabla \ell ( \theta_{k}; z_{k+1} )-\nabla f(\theta_{PS}; \theta_{PS})+\nabla \ell ( \theta_{PS}; z_{k+1} ) -\nabla \ell ( \theta_{PS}; z_{k+1} ) \right\|^{2} \\
   &\leq 2 \left( \left\| \nabla\ell ( \theta_{PS}; z_{k+1} )-\nabla\ell ( \theta_{k}; z_{k+1} ) \right\|^2+\left\|\nabla f(\theta_{PS}; \theta_{PS})-\nabla\ell ( \theta_{PS}; z_{k+1} ) \right\|^2  \right)\\
   &\leq 2L^2 \left\|\theta_{k}-\theta_{PS} \right\|^2 + 2 \sigma^2 
\end{aligned}
\end{eqnarray}
where the third inequality is due to Assumptions \ref{ass:gradient}, \ref{ass:bounded}. 
Combing the bounds for $B_1$, $B_{2}$ and $B_3$, we can get the desired inequality.
\begin{eqnarray}\label{eq:recursion}
\begin{aligned}
	& \left\|\theta_{k+1}-\theta_{PS}\right\|^2 \\
	&\leq \left\|\theta_{k}-\theta_{PS}\right\|^2+ 2 \gamma_{k+1}^2\cdot \left(\sigma^2+L^2 \left\|\theta_{k}-\theta_{PS}\right\|^2\right)\\
	&\quad -2\gamma_{k+1}\left(\left<\theta_k -\theta_{PS}|\nabla \ell( \theta_{k}; z_{k+1} )-\nabla f(\theta_{k}; \theta_{k})\right> +(\mu-L\varepsilon) \left\|\theta_{k}-\theta_{PS} \right\|^{2}\right) \\
	& = \big( 1 - 2 \gamma_{k+1} ( \mu - L \epsilon ) + 2 \gamma_{k+1}^2 L^2 \big) \| \theta_k - \theta_{PS} \|^2 \\
	& \quad  + 2 \gamma_{k+1}^2 \sigma^2 - 2\gamma_{k+1} \pscal{ \theta_k - \theta_{PS} }{ \grd \ell( \theta_k; z_{k+1} ) - \grd f( \theta_k; \theta_k ) }.
\end{aligned}
\end{eqnarray}

It is noted that if we consider a case when the SA scheme \eqref{eq:mkv_setting} is non-state-dependent, e.g., $z_{k+1}$ is drawn from $\mathcal{D}(\theta_{k})$  independently, then proving Lemma \ref{lem:square} suffices to show our desired Theorem \ref{th:main} since the last term in equation (\ref{eq:square_1st}) is zero mean when conditioned on the previous iterates [cf. \eqref{eq:inter}]. 

\subsection{Proof of Lemma~\ref{lem:poisson}} \label{app:poisson}
Applying Assumption~\ref{ass:poisson} shows that the sum of inner product can be evaluated as 
\begin{eqnarray*}
\begin{aligned}
	& \sum_{s=1}^{k}\gamma_{s}G_{s+1:k}\EE\pscal{ \theta_{PS} - \theta_{s-1} } {\nabla \ell( \theta_{s-1}; z_s )-\nabla f(\theta_{s-1}; \theta_{s-1}) }  \\
	&= \sum_{s=1}^{k}\gamma_{s}G_{s+1:k}\EE\pscal{ \theta_{PS} - \theta_{s-1} } {\widehat{\nabla\ell}( \theta_{s-1}; z_s )-\MK_{\theta_{s-1}} \widehat{\nabla\ell}( \theta_{s-1}; z_s )} \equiv \EE\left(A_1+A_2+A_3+A_4+A_5\right),
\end{aligned}
\end{eqnarray*}
where we decomposed the sum of inner product into five sub-terms $A_{1}$, $A_{2}$, $A_{3}$, $A_{4}$, $A_{5}$ such that
\begingroup
\allowdisplaybreaks
\begin{align*}
	&A_{1}:=-\sum_{s=2}^{k}\gamma_{s}G_{s+1:k}\pscal{ \theta_{s-1}-\theta_{PS}} { \widehat{\nabla\ell}( \theta_{s-1}; z_s )-\MK_{\theta_{s-1}}\widehat{\nabla \ell}( \theta_{s-1}; z_{s-1} ) } \\
	&A_{2}:=-\sum_{s=2}^{k}\gamma_{s}G_{s+1:k} \pscal{ \theta_{s-1}-\theta_{PS} }{ \MK_{\theta_{s-1}} \widehat{\nabla\ell}( \theta_{s-1}; z_{s-1} )-\MK_{\theta_{s-2}}\widehat{\nabla\ell}( \theta_{s-2}; z_{s-1} ) }\\
	&A_{3}:=-\sum_{s=2}^{k}\gamma_{s}G_{s+1:k} \pscal{ \theta_{s-1}-\theta_{s-2} } { \MK_{\theta_{s-2}}\widehat{\nabla\ell}( \theta_{s-2}; z_{s-1} ) }\\
	&A_{4}:=-\sum_{s=2}^{k}(\gamma_{s}G_{s+1:k}-\gamma_{s-1}G_{s:k})\pscal{ \theta_{s-2}-\theta_{PS} }{ \MK_{\theta_{s-2}}\widehat{\nabla\ell}(\theta_{s-2}; z_{s-1}) }\\
	&A_{5}:=-\gamma_{1}G_{2:k}\pscal{ \theta_{0}-\theta_{PS} } {\widehat{\nabla\ell}( \theta_{0}; z_1 ) } +	
	\gamma_{k}  \pscal{ \theta_{k-1}-\theta_{PS} }{ \MK_{\theta_{k-1}}\widehat{\nabla\ell}( \theta_{k-1}; z_k ) } .
\end{align*}
\endgroup

We remark that a similar decomposition can be found in \citep{benveniste2012adaptive}. However, \cite{benveniste2012adaptive} proceeded with the analysis by assuming that $\theta_k$ stays in the compact set for all $k \geq 0$. We do not make such assumption in this work.

For $A_{1},$ we note that $\widehat{\nabla\ell}( \theta_{s-1}; z_s )-\MK_{\theta_{s-1}}\widehat{\nabla\ell}( \theta_{s-1}; z_{s-1} )$ is a martingale difference sequence and therefore we have $\EE\left[A_{1}\right]=0$ by taking the total expectation.

For $A_{2}$, as $\theta_{k+1}=\theta_{k}-\gamma_{k+1} \nabla {\ell}\left(  \theta_{k}; z_{k+1} \right) $, we get $\theta_{s-1}-\theta_{s-2}=-\gamma_{s-1} \nabla {\ell} ( \theta_{s-2}; z_{s-1})$. Applying the smoothness condition  \Cref{ass:poisson_bound} shows that
\beq
\begin{split}
    A_{2}&=-\sum_{s=2}^{k}\gamma_{s}G_{s+1:k}\pscal {\theta_{s-1}-\theta_{PS}} {\MK_{\theta_{s-1}} \widehat{\nabla\ell}( \theta_{s-1}; z_{s-1} )-\MK_{\theta_{s-2}}\widehat{\nabla\ell}( \theta_{s-2}; z_{s-1}) }\\
	&
	\leq  \Lph \sum_{s=2}^k \gamma_{s} G_{s+1:k} \left\|\theta_{s-1}-\theta_{PS}\right\| \left\|\theta_{s-1}-\theta_{s-2}\right\|
	\\
	&\leq \Lph \sum_{s=2}^k  \gamma_{s-1} \gamma_{s} G_{s+1:k} \left\|\theta_{s-1}-\theta_{PS}\right\|  \left\|\grd \ell( \theta_{s-2}; z_{s-1} )\right\|.
\end{split}
\eeq
Combining with the implied bound \eqref{eq:corr} from the assumptions as well as \eqref{eq:stepsize_cond} yield
\begin{align*}
	A_2  
	&
	\leq \varsigma \Lph \overline{L} \, \sum_{s=2}^k \gamma_{s}^2  G_{s+1:k} \|\theta_{s-1}-\theta_{PS} \|  \left( 1 + \| \theta_{s-2} - \theta_{PS} \| \right) 
	\\
	&
	\leq \varsigma \Lph \overline{L} \, \sum_{s=2}^k \gamma_{s}^2 G_{s+1:k}  \left\{ \frac{1}{2} + \frac{1}{2} \| \theta_{s-2} - \theta_{PS} \|^2 +  \| \theta_{s-1} - \theta_{PS} \|^2 
	\right\}
	\\
	& 
	\leq  \varsigma \Lph \overline{L} \, \Big\{ \frac{1}{2} \sum_{s=2}^k \gamma_s^2 G_{s+1:k} + \frac{1}{2} \sum_{s=2}^k \gamma_s^2 G_{s+1:k} \| \theta_{s-2} - \theta_{PS} \|^2 +  \sum_{s=2}^k \gamma_s^2 G_{s+1:k} \| \theta_{s-1} - \theta_{PS} \|^2 \Big\} ,
\end{align*}
where the second inequality applies $a(1 + c) \leq \frac{1}{2} + \frac{1}{2} c^2 +  a^2 $ for any $a,c\in \RR$. 

For $A_{3}$, again using \eqref{eq:corr}, we observe that
\beq
\begin{split}
    A_{3}&=-\sum_{s=2}^{k}\gamma_{s}G_{s+1:k}\pscal{ \theta_{s-1}-\theta_{s-2} }{ \MK_{\theta_{s-2}}\widehat{\nabla\ell}( \theta_{s-2}; z_{s-1} ) }\\	
	&\leq \sum_{s=2}^{k}\gamma_{s} G_{s+1:k}\left\|\theta_{s-1}-\theta_{s-2}\right\|\cdot \left\|\MK_{\theta_{s-2}}\widehat{\nabla\ell}( \theta_{s-2}; z_{s-1} )\right\| \\
	&\leq \sum_{s=2}^{k}\gamma_{s}\gamma_{s-1} G_{s+1:k} \norm{\grd \ell( \theta_{s-2}; z_{s-1} )}\cdot \LZ \left( 1 + \norm{\theta_{s-2} - \theta_{PS} } \right)\\
	&\leq \varsigma \overline{L} \LZ \sum_{s=2}^{k}\gamma_{s}^2 G_{s+1:k} (1 + \| \theta_{s-2} - \theta_{PS} \| )^2 \\
	&\leq 2 \varsigma \overline{L} \LZ \sum_{s=2}^{k}\gamma_{s}^2 G_{s+1:k} \{ 1 + \| \theta_{s-2} - \theta_{PS} \|^2 \} .
\end{split}
\eeq

For $A_{4}$, we notice that
\beq
\begin{split}
    A_{4}&=-\sum_{s=2}^{k}(\gamma_{s}G_{s+1:k}-\gamma_{s-1}G_{s:k}) \pscal{ \theta_{s-2}-\theta_{PS}} { \MK_{\theta_{s-2}}\widehat{\nabla\ell}( \theta_{s-2}; z_{s-1}) } \\
	&\leq \sum_{s=2}^{k} | \gamma_{s}G_{s+1:k}-\gamma_{s-1}G_{s:k} | \left\|\theta_{s-2}-\theta_{PS} \right\|\cdot \left\|\MK_{\theta_{s-2}}\widehat{\grd\ell}(\theta_{s-2}; z_{s-1} )\right\| .
\end{split}
\eeq
It can be shown that $| \gamma_{s}G_{s+1:k}-\gamma_{s-1}G_{s:k} | \leq ( 1 + \tilde{\mu} ) \varsigma \gamma_s^2 G_{s+1:k} $, therefore 
\beq
\begin{split}
	A_4 &\leq ( 1 + \tilde{\mu} ) \varsigma \LZ \sum_{s=2}^{k} \gamma_{s}^{2} G_{s+1:k} \left\|\theta_{s-2}-\theta_{PS} \right\| \left( 1 + \norm{ \theta_{s-2} - \theta_{PS} } \right)\\
	& \leq ( 1 + \tilde{\mu} ) \varsigma \LZ \sum_{s=2}^{k} \gamma_{s}^{2} G_{s+1:k} \Big\{ \frac{1}{2} + \frac{3}{2} \| \theta_{s-2} - \theta_{PS} \|^2 \Big\} \\
	& \leq ( 1 + \tilde{\mu} ) \varsigma \LZ \Big\{ \frac{1}{2} \sum_{s=2}^{k} \gamma_{s}^{2} G_{s+1:k} + \frac{3}{2} \sum_{s=2}^{k} \gamma_{s}^{2} G_{s+1:k} \| \theta_{s-2} - \theta_{PS} \|^2 \Big\}.
\end{split}
\eeq

Finally, for $A_{5}$, we have 
\begin{align*}
	A_{5}&=-\gamma_{1}G_{2:k}\pscal{ \theta_{0}-\theta_{PS}} { \widehat{\nabla\ell}(\theta_{0}; z_{1} ) } +	
	\gamma_{k} \pscal{ \theta_{k-1}-\theta_{PS}} {\MK_{\theta_{k-1}}\widehat{\nabla\ell} ( \theta_{k-1}; z_{k} ) }\\
	&\leq \gamma_{1} G_{2:k} \left\|\theta_{0}-\theta_{PS} \right\| \left\|\widehat{\grd\ell}( \theta_{0}; z_{1} )\right\|+	
	\gamma_{k} \left\|\theta_{k-1}-\theta_{PS}\right\|\left\|\MK_{\theta_{k-1}}\widehat{\grd\ell}( \theta_{k-1}; z_{k}) \right\|  \\
	& \leq \gamma_1 \widehat{L} \, G_{2:k} \| \theta_0 - \theta_{PS} \| \big( 1 + \| \theta_0 - \theta_{PS} \| \big) + \gamma_k \LZ \, \| \theta_{k-1} - \theta_{PS} \| \big( 1 + \| \theta_{k-1} - \theta_{PS} \| \big) \\
	& \leq \frac{\gamma_1 \widehat{L} \, G_{2:k}}{2} + \frac{ \gamma_k \LZ }{2} + \frac{3 \gamma_1 \widehat{L}}{2} \, G_{2:k} \| \theta_0 - \theta_{PS} \|^2 + \frac{3 \gamma_k \LZ}{2} \, \| \theta_{k-1} - \theta_{PS} \|^2
\end{align*}
Summing up $A_{1}$ to $A_{5}$ and taking the full expectation yield:
\begin{align*} 
	& 2 \big| \EE\left[A_1+A_2+A_3+A_4+A_5\right] \big| \\
	& \leq \varsigma \Lph \overline{L} \, \Big\{ \sum_{s=2}^k \gamma_s^2 G_{s+1:k} + \sum_{s=2}^k \gamma_s^2 G_{s+1:k} \Delta_{s-2} + 2\sum_{s=2}^k \gamma_s^2 G_{s+1:k} \Delta_{s-1}  \Big\} \\
	& \quad + 4 \varsigma \overline{L} \LZ \sum_{s=2}^{k}\gamma_{s}^2 G_{s+1:k} \big\{ 1 + \Delta_{s-2} \big\} + ( 1 + \tilde{\mu} ) \varsigma \LZ \Big\{ \sum_{s=2}^{k} \gamma_{s}^{2} G_{s+1:k} + 3 \sum_{s=2}^{k} \gamma_{s}^{2} G_{s+1:k} \Delta_{s-2} \Big\} \\
	&\quad + \gamma_1 \widehat{L} \, G_{2:k} + \gamma_k \LZ  + 3 \gamma_1 \widehat{L} \, G_{2:k} \Delta_0 + 3 \gamma_k \LZ \, \Delta_{k-1} .
\end{align*}
Recall the following constants:
\beq \label{cone_ctwo}
	\Cone := \varsigma \Lph \overline{L} + 4 \varsigma \overline{L} \LZ + ( 1 + \tilde{\mu} ) \varsigma \LZ, ~~ \Ctwo := 2 \varsigma \Lph \overline{L}, ~~ \Cthree := \varsigma \Lph \overline{L} + 4 \varsigma \overline{L} \LZ + 3 ( 1 + \tilde{\mu} )\varsigma \LZ.
\eeq
We obtain the desirable bound for the lemma:
\begin{align*} 
	& 2 \big| \EE\left[A_1+A_2+A_3+A_4+A_5\right] \big| \\
	& \leq \sum_{s=2}^k \gamma_s^2 G_{s+1:k} \big( \Cone + \Ctwo \Delta_{s-1} + \Cthree \Delta_{s-2} \big) + \LZ \gamma_k \big\{ 1 + 3 \Delta_{k-1} \big\} + \gamma_1 G_{2:k} \big( \LZ (1 + 3 \Delta_0 ) + \gamma_1 \Cone \big).
\end{align*}
This concludes the proof.

\subsection{Proof of Lemma~\ref{lem:bdd}} \label{app:bdd}
Consider the inequality in \eqref{eq:lem5}. We consider a non-negative upper bound sequence $\{ {\rm U}_k \}_{k \geq 0}$ defined by the recursion:
\beq 
\begin{split}
	{\rm U}_k & = G_{1:k} {\rm U}_0 + \Big( \frac{ 2}{ \tilde{\mu} } (2 \sigma^2 + \Cone) + \LZ \Big) \gamma_k + \sum_{s=1}^{k-1} \gamma_{s+1}^2 G_{s+2:k} \big( \Ctwo {\rm U}_s + \Cthree {\rm U}_{s-1} \big) \\
	& \quad + \gamma_1 G_{2:k} \big\{ \LZ (1 + 3 {\rm U}_0 ) + \gamma_1 ( 2 \sigma^2 + \Cone ) \big\} + 3 \gamma_k \LZ {\rm U}_{k-1} , \\[.2cm]
\end{split} 
\eeq
for any $k \geq 1$, and we have defined ${\rm U}_0 = \Delta_0$.
Notice that by construction, we have $\Delta_k \leq {\rm U}_k$ for any $k \geq 0$. 

Using the convention that ${\rm U}_{-1} = 0$, we observe that for any $k \geq 1$, 
\beq \label{eq:further_upd}
\begin{split}
{\rm U}_k & = ( 1 - \gamma_k \tilde{\mu} ) {\rm U}_{k-1} + \Big( \frac{ 2}{ \tilde{\mu} } (2 \sigma^2 + \Cone) + \LZ \Big) \Big( \gamma_k - (1 - \gamma_k \tilde{\mu} ) \gamma_{k-1} \Big) \\
& \quad + \gamma_{k}^2 ( \Ctwo {\rm U}_{k-1} + \Cthree {\rm U}_{k-2} ) + 3 \LZ \Big( \gamma_k {\rm U}_{k-1} - (1 - \gamma_k \tilde{\mu}) \gamma_{k-1} {\rm U}_{k-2} \Big) \\
& \leq \big( 1 - \gamma_k \tilde{\mu} + \gamma_{k}^2 \Ctwo \big) {\rm U}_{k-1} + \Big( \frac{ 2}{ \tilde{\mu} } (2 \sigma^2 + \Cone) + \LZ \Big) \tilde{\mu} \varsigma \gamma_k^2 + \Big( \Cthree + 3 \LZ \tilde{\mu} \Big)\gamma_{k} \gamma_{k-1} {\rm U}_{k-2} \\
& \quad + 3 \LZ \big( \gamma_k {\rm U}_{k-1} - \gamma_{k-1} {\rm U}_{k-2} \big) \\
& \leq \big( 1 - \gamma_k \tilde{\mu} / 2 \big) {\rm U}_{k-1} + \Big( \frac{ 2}{ \tilde{\mu} } (2 \sigma^2 + \Cone) + \LZ \Big) \tilde{\mu} \varsigma \gamma_k^2 + \Big( \Cthree + 3 \LZ \tilde{\mu} \Big)\gamma_{k} \gamma_{k-1} {\rm U}_{k-2} \\
& \quad + 3 \LZ \big( \gamma_k {\rm U}_{k-1} - \gamma_{k-1} {\rm U}_{k-2} \big),
\end{split}
\eeq
where the last inequality is due to $\gamma_k \leq \tilde{\mu}/2 \Ctwo$.

We prove part {\sf (i)} of the lemma. 
From \eqref{eq:further_upd}, we consider an upper bound sequence $\{ \oU_k \}_{k \geq -1}$ defined by the recursion:
\beq \label{eq:tobesum}
\begin{split}
\oU_k & = \big( 1 - \gamma_k \tilde{\mu} / 2 \big) \oU_{k-1} + \Big( \frac{ 2}{ \tilde{\mu} } (2 \sigma^2 + \Cone) + \LZ \Big) \tilde{\mu} \varsigma \gamma_k^2 + \Big( \Cthree + 3 \LZ \tilde{\mu} \Big)\gamma_{k} \gamma_{k-1} \oU_{k-2} \\
& \quad + 3 \LZ \big( \gamma_k \oU_{k-1} - \gamma_{k-1} \oU_{k-2} \big), \quad \forall k \geq 1.
\end{split}
\eeq
We have also defined $\oU_0 = {\rm U}_0$, $\oU_{-1} = 0$. 
For any $t \geq 1$, summing up the equation \eqref{eq:tobesum} from $k=1$ to $k=t$ yields
\beq \notag
\begin{split}
\sum_{k=1}^t \oU_k & = \sum_{k=1}^t \Big\{ \big( 1 - \gamma_k \tilde{\mu} / 2 \big) \oU_{k-1} + \Big( \frac{ 2}{ \tilde{\mu} } (2 \sigma^2 + \Cone) + \LZ \Big) \tilde{\mu} \varsigma \gamma_k^2 + \Big( \Cthree + 3 \LZ \tilde{\mu} \Big)\gamma_{k} \gamma_{k-1} \oU_{k-2} \Big\} \\
& \quad + 3 \LZ \sum_{k=1}^t \big( \gamma_k \oU_{k-1} - \gamma_{k-1} \oU_{k-2} \big),
\end{split}
\eeq
Rearranging terms leads to 
\beq \notag
\oU_t = \oU_0 +  \sum_{k=1}^t \Big\{  \Big( \Cthree + 3 \LZ \tilde{\mu} \Big) \gamma_k \gamma_{k-1} \oU_{k-2} + \Big( \frac{ 2}{ \tilde{\mu} } (2 \sigma^2 + \Cone) + \LZ \Big) \tilde{\mu} \varsigma \gamma_k^2 -  \frac{\tilde{\mu}}{2} \gamma_k \oU_{k-1} \Big\} + 3 \LZ  \gamma_t \oU_{t-1} 
\eeq
Using the step size conditions $\gamma_k \leq \gamma_{k-1}$, $\gamma_k \leq \big( \Cthree + 3 \LZ \tilde{\mu} \big)^{-1} \min\{  \tilde{\mu} / 2 , 3 \LZ \}$, $\gamma_k \leq (6 \LZ)^{-1}$,  
\beq
\begin{split}
\oU_t & \leq \oU_0 + 3 \LZ \gamma_t \oU_{t-1}  \\
& \quad  + \sum_{k=1}^t \Big\{ \Big[ \Big( \Cthree + 3 \LZ \tilde{\mu} \Big) \gamma_k^2 - \frac{\tilde{\mu}}{2} \gamma_k \Big] \oU_{k-1} + \Big( \frac{ 2}{ \tilde{\mu} } (2 \sigma^2 + \Cone) + \LZ \Big) \tilde{\mu} \varsigma \gamma_k^2 \Big\} \\
& \leq 3 \LZ \gamma_t \oU_{t-1} + \oU_0 + \Big( \frac{ 2}{ \tilde{\mu} } (2 \sigma^2 + \Cone) + \LZ \Big) \tilde{\mu}  \varsigma \sum_{k=1}^t \gamma_k^2 \\
& \leq \frac{1}{2} \oU_{t-1} + \Big\{ \oU_0 + \Big( \frac{ 2}{ \tilde{\mu} } (2 \sigma^2 + \Cone) + \LZ \Big) \tilde{\mu} \varsigma \sum_{k=1}^t \gamma_k^2 \Big\},
\end{split}
\eeq
where we obtain the first inequality after  shifting the summation's index and it is noted that $\oU_{-1}=0.$ Rearranging terms and solving the recursion lead to 
\beq
\begin{split}
\oU_t & \leq \big( \frac{1}{2} \big)^{t} \oU_0 + \sum_{s=1}^t \big( \frac{1}{2} \big)^{t-s} \Big\{ \oU_0 + \Big( \frac{ 2}{ \tilde{\mu} } (2 \sigma^2 + \Cone) + \LZ \Big) \tilde{\mu} \varsigma \sum_{k=1}^s \gamma_k^2 \Big\} \\
& \leq 3 \oU_0 + 2 \tilde{\mu} \varsigma \Big( \frac{ 2}{ \tilde{\mu} } (2 \sigma^2 + \Cone) + \LZ \Big) \sum_{\ell=1}^t \gamma_\ell^2 \big( \frac{1}{2} \big)^{t-\ell} \\
&\leq 3 \oU_0 + \frac{ \tilde{\mu} \varsigma}{ 9 \LZ^2 } \Big( \frac{ 2}{ \tilde{\mu} } (2 \sigma^2 + \Cone) + \LZ \Big)
\end{split}
\eeq
Recall that $\overline{\Delta} \eqdef 3 \oU_0 + \frac{ \varsigma}{ 9 \LZ^2 } \Big( 2 (2 \sigma^2 + \Cone) +  \tilde{\mu} \LZ \Big)$, the above shows $\Delta_t \leq {\rm U}_t \leq \oU_t \leq \overline{\Delta}$ for any $t \geq 1$, thus establishing part {\sf (i)}.

We now proceed to proving part {\sf (ii)} of the lemma. We define $\oG_{m:n} = \prod_{\ell=m}^n (1 - \gamma_\ell \tilde{\mu}/2)$ and observe from \eqref{eq:tobesum} that
\beq
\begin{split}
{\rm U}_k & \leq \oG_{1:k} {\rm U}_0 + \sum_{s=1}^k \oG_{s+1:k} \Big\{ \Big( \frac{ 2}{ \tilde{\mu} } (2 \sigma^2 + \Cone) + \LZ \Big) \tilde{\mu} \varsigma \gamma_s^2 + \Big( \Cthree + 3 \LZ \tilde{\mu} \Big)\gamma_{s} \gamma_{s-1} {\rm U}_{s-2} \Big\} \\
& \quad + 3 \LZ  \sum_{s=1}^k \oG_{s+1:k} \Big\{  \big( \gamma_s {\rm U}_{s-1} - \gamma_{s-1} {\rm U}_{s-2} \big) \Big\}.
\end{split}
\eeq
Notice that 
\beq
\begin{split}
	& \sum_{s=1}^k \oG_{s+1:k} \big( \gamma_s {\rm U}_{s-1} - \gamma_{s-1} {\rm U}_{s-2} \big) \\
	& = \sum_{s=1}^k \oG_{s+1:k} \big( \gamma_s {\rm U}_{s-1} + (1 - \gamma_s \tilde{\mu} / 2) ( \gamma_{s-1} {\rm U}_{s-2} - \gamma_{s-1} {\rm U}_{s-2} ) - \gamma_{s-1} {\rm U}_{s-2} \big) \\
	& = \sum_{s=1}^k \Big\{ \Big( \oG_{s+1:k} \gamma_s {\rm U}_{s-1} - \oG_{s:k} \gamma_{s-1} {\rm U}_{s-2} \Big) - \gamma_s \gamma_{s-1} \tilde{\mu} {\rm U}_{s-2} / 2 \Big\} \leq \gamma_k {\rm U}_{k-1} \leq \gamma_k \overline{\Delta}.
\end{split} 
\eeq
By Lemma~\ref{lem:o_gamma_k}, we have $\sum_{s=1}^k \oG_{s+1:k} \gamma_s^2 \leq 4 \gamma_k / \tilde{\mu}$ and the following is obtained
\beq
{\rm U}_k \leq \oG_{1:k} {\rm U}_0 + \Big\{ \frac{4 \varsigma}{\tilde{\mu}} \Big( 2 (2 \sigma^2 + \Cone) + \tilde{\mu} \LZ \Big) + 3 \LZ \overline{\Delta}  + \frac{4 \varsigma}{\tilde{\mu}} \Big( \Cthree + 3 \LZ \tilde{\mu} \Big) \overline{\Delta} \Big\} \gamma_k .
\eeq
The proof is completed.

\subsection{Auxiliary Lemmas}

\begin{Lemma}\label{lem:o_gamma_k}
	Let $a>0$ and $\left(\gamma_{k}\right)_{k \geq 1}$ be a non-increasing sequence such that $\gamma_{1}< 2 / a$. If $\gamma_{k-1} / \gamma_k \leq 1 + (a/2)\gamma_k$ for any $k \geq 1$, then for any $k \geq 2$, 
\beq
\sum_{j=1}^{k} \gamma_{j}^{2} \prod_{\ell=j+1}^{k}\left(1-\gamma_{\ell} a\right) \leq  \frac{2}{a} \gamma_{k}.
\eeq
\end{Lemma}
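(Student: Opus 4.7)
The plan is to prove this by induction on $k$ using the natural recursive structure of the sum. Define $S_k := \sum_{j=1}^{k} \gamma_{j}^{2} \prod_{\ell=j+1}^{k}\left(1-\gamma_{\ell} a\right)$. Pulling out the last term in the product and isolating the $j=k$ summand yields the one-step recursion
\[
S_k = (1 - \gamma_k a)\, S_{k-1} + \gamma_k^2, \qquad k \geq 2,
\]
with $S_1 = \gamma_1^2$. The goal is to show $S_k \leq (2/a)\gamma_k$ for all $k \geq 2$ (in fact, the same bound holds for $k=1$ directly, since $\gamma_1 < 2/a$ gives $\gamma_1^2 < (2/a)\gamma_1$, which will serve as the induction base).

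For the inductive step, suppose $S_{k-1} \leq (2/a)\gamma_{k-1}$. Then
\[
S_k \leq (1 - \gamma_k a)\,\frac{2}{a}\,\gamma_{k-1} + \gamma_k^2.
\]
The hypothesis $\gamma_{k-1}/\gamma_k \leq 1 + (a/2)\gamma_k$ bounds $\gamma_{k-1}$ in terms of $\gamma_k$. Substituting and expanding the product,
\[
(1 - \gamma_k a)\bigl(1 + (a/2)\gamma_k\bigr) = 1 - (a/2)\gamma_k - (a^2/2)\gamma_k^2,
\]
so that
\[
S_k \leq \frac{2}{a}\gamma_k \bigl[1 - (a/2)\gamma_k - (a^2/2)\gamma_k^2\bigr] + \gamma_k^2 = \frac{2}{a}\gamma_k - a\gamma_k^3 \leq \frac{2}{a}\gamma_k,
\]
which closes the induction. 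The two $-\gamma_k^2$-type terms cancel precisely, which is the mechanism that makes the step-size condition $\gamma_{k-1}/\gamma_k \leq 1 + (a/2)\gamma_k$ sharp for this argument.

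There is no real obstacle here: the only things to watch are (i) verifying the one-step recursion by correctly peeling off the factor $(1-\gamma_k a)$ from every $j < k$ term while noting the empty product equals $1$ for the $j=k$ term, and (ii) handling the base case via the strict inequality $\gamma_1 < 2/a$. The non-increasing property of $\{\gamma_k\}$ and the condition $\gamma_1 < 2/a$ together ensure $1 - \gamma_\ell a > -1$, but in fact the induction only uses that $(1-\gamma_k a)$ multiplies a nonnegative quantity, so no additional sign bookkeeping is required.
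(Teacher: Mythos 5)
Your induction is correct and arrives at the bound by a genuinely different route than the paper. The paper's proof is a direct (non-inductive) telescoping argument: it writes $\gamma_j = \gamma_k\prod_{\ell=j+1}^k(\gamma_{\ell-1}/\gamma_\ell)$, folds the ratio into the product so each factor becomes $(\gamma_{\ell-1}/\gamma_\ell)(1-\gamma_\ell a)\le(1+(a/2)\gamma_\ell)(1-\gamma_\ell a)\le 1-\gamma_\ell a/2$, and then telescopes the resulting sum via $\gamma_j(a/2)\prod_{\ell=j+1}^k(1-\gamma_\ell a/2)=\prod_{\ell=j+1}^k(1-\gamma_\ell a/2)-\prod_{\ell=j}^k(1-\gamma_\ell a/2)$ to get the factor $2/a$. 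Your one-step recursion $S_k=(1-\gamma_k a)S_{k-1}+\gamma_k^2$ exploits exactly the same cancellation, localized to a single step, and is arguably more transparent about why the ratio condition $\gamma_{k-1}/\gamma_k\le 1+(a/2)\gamma_k$ is the right one; the paper's version has the minor advantage of producing the slightly sharper closed form $\frac{2\gamma_k}{a}\bigl(1-\prod_{\ell=1}^k(1-\gamma_\ell a/2)\bigr)$ without tracking an induction hypothesis. One caveat: your closing remark about sign bookkeeping misidentifies the issue. The inductive step multiplies the hypothesis $S_{k-1}\le(2/a)\gamma_{k-1}$ by $(1-\gamma_k a)$, so what matters is the sign of $(1-\gamma_k a)$ itself, not whether it multiplies a nonnegative quantity; if $\gamma_k\in(1/a,2/a)$ the inequality would reverse and your step (and, for that matter, the factorwise comparison in the paper's product bound) would need extra care. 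This does not affect the lemma as used, since the step-size conditions of Theorem~\ref{th:main} force $\gamma_k a\le 1$ in the application, but you should either state $\gamma_1\le 1/a$ as a working hypothesis or handle the case $1-\gamma_k a<0$ separately rather than assert that no sign analysis is needed.
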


\begin{proof}
The proof is elementary. Observe that:
\beq 
\begin{split}
   \sum_{j=1}^{k} \gamma_{j}^{2} \prod_{\ell=j+1}^{k}\left(1-\gamma_{\ell} a\right)  & = \gamma_k \sum_{j=1}^{k} \gamma_{j} \prod_{\ell=j+1}^{k} \frac{ \gamma_{\ell-1} }{ \gamma_\ell } \left(1-\gamma_{\ell} a\right) \\
   & \leq \gamma_k \sum_{j=1}^{k} \gamma_{j} \prod_{\ell=j+1}^{k} (1 + (a/2) \gamma_\ell ) \left(1-\gamma_{\ell} a\right) \\
   & \leq \gamma_k \sum_{j=1}^{k} \gamma_{j} \prod_{\ell=j+1}^{k} \left(1-\gamma_{\ell} (a/2) \right) \\
  & = \frac{2 \gamma_k}{a} \sum_{j=1}^{k} \left( \prod_{\ell=j+1}^{k} (1 - \gamma_\ell a/2 ) - \prod_{\ell'=j}^{k} (1 - \gamma_{\ell'} a/2 ) \right) \\
  & = \frac{2 \gamma_k}{a} \left( 1 - \prod_{\ell'=1}^{k} (1 - \gamma_{\ell'} a/2 ) \right) \leq \frac{2 \gamma_k}{a}.
\end{split}
\eeq
The proof is concluded.
\end{proof}

\section{Details of the Numerical Experiments} \label{app:details}
\vspace{-0.2cm}
    
    
This section provides details about the numerical experiments on the second problem of strategic classification (SC) in \S\ref{sec:num}. Moreover, we provide additional experiment results to better illustrate the performance of the state dependent SA algorithm for this problem.

The experiments conducted in this section are based on the Credit simulator provided at \url{https://github.com/zykls/performative-prediction}.
Our experiments are conducted on a server with Intel Xeon Gold 6138 CPU. The Python codes are executed in a single-thread environment. 

There are two roles in the SC problem -- \emph{learner} and \emph{agents}. The learner utilizes agents' information to obtain a classifier $f_{\theta}$. Meanwhile, individual agents hope to be assigned to a favorable class. To do so, they modify their features and thereby shifting the data distribution towards the target ${\cal D}(\theta)$. 
Specifically, our experiments are done on the \texttt{GiveMeSomeCredit} dataset with $m=18357$ samples as we select $d=3$ features to build the classifier. Each (original) data sample is given by $\bar{z}_i = (\bar{x}_i, \bar{y}_i)$ with the label $\bar{y}_i \in \{0,1\}$ and selected feature $\bar{x}_i \in \RR^3$. We associate each data sample to an agent. The task for the learner (bank) is to design a classifier that distinguishes whether the application of an individual (agent) who want to default a loan should be granted or not. 

We simulate the \emph{adapted best response} presented in \textbf{Example 1} of the main paper. In this setting, the agents rely on their past experience to present data to the learner that is favorable to to agents. The latter is achieved by a gradient descent step that depends on the current learner's state ($\theta_k$), past agent's state ($z_k$) and the original data (${\cal D}_0$).
As the dynamics is coupled between the agents' and learner's update, we present the overall algorithm based on \eqref{eq:sa}, \eqref{eq:mkv_setting} as follows:
\begin{center}
    \begin{mdframed}[innertopmargin=8pt,linecolor=black!20,linewidth=1pt]
    \begin{center}
        \textbf{\underline{Algorithm 2: State-dependent SA with Adapted Best Response.}}
    \end{center}
    \begin{enumerate}[leftmargin=11mm]
    \item[\textbf{Input}:] initial iterate $\theta_0 \in \RR^d$, agents' state $x_i^0 = \bar{x}_i$, $i \in \{1,...,m\}$ such that $\bar{x}_i$ is the $i$th original feature vector, step sizes $\{ \gamma_k \}_{k \geq 0}$, agents' response rate $\alpha > 0$, update parameter ${\sf b}$.\vspace{-.1cm}
    \item[\textbf{For}] $k=0,1,2, \ldots$ \vspace{-.1cm}
    \item A subset of \emph{agents}, ${\cal I}_k$ with $|{\cal I}_k| = {\sf b}$, is selected uniformly from $\{1,...,m\}$. They adapt their feature vectors based on past experience and $\theta_k$ as:
    \beq \label{eq:abr}
    x_i^{k+1} = x_i^k + \alpha \grd U( x_i^k ; \bar{z}_i, \theta_k ),~\forall~i \in {\cal I}_k, \quad x_i^{k+1} = x_i^k,~\forall~i \notin {\cal I}_k. \vspace{-.1cm}
    \eeq
    \item An agent $i_k \in \{1,...,m\}$ is drawn uniformly to present data. Set $z_{k+1} = ( x_{i_k}^{k+1} , y_{i_k})$.\vspace{-.1cm}
    \item The \emph{learner} computes the $k+1$th iterate by:\vspace{-.1cm}
    \beq \notag
	\theta_{k+1}=\theta_k-\gamma_{k+1} \grd \ell(\theta_{k}; z_{k+1}). 
    \eeq
    The most recent iterate $\theta_{k+1}$ is deployed and made available to the agent(s). 
    \end{enumerate}
    \end{mdframed}
\end{center}
Steps 1 \& 2 in Algorithm~2 resemble the adaptive best response update in \eqref{eq:ibr}. We emphasize that these two steps are \emph{agnostic} to the learner as the latter only sees $z_{k+1}$ at iteration $k$, similarly, the last step is not known to the agents as the latter only sees the classifier given as $\theta_{k+1}$.

Furthermore, we recall that the following two types of utility functions are considered as $U(\cdot)$:
\beq 
\begin{split}
    U_{\sf q}(x^{\prime};{z}, \theta) & = \pscal{\theta}{x'}- \frac{\|x'-x\|^{2}}{2\epsilon}, \\
    U_{\sf lg}(x^{\prime};{z}, \theta) & = y \pscal{\theta}{x'} - \log\left(1+\exp(\pscal{\theta}{x'})\right)- \frac{\|x'-x\|^{2}}{2\epsilon}.
\end{split}
\eeq
In step 1, the agents' response rate $\alpha$ and parameter ${\sf b}$ control the speed of adaptation among the group of $m$ agents. These parameters will affect the mixing time of the MC which determines the bounds in \Cref{th:main}. 
Overall, we observe that the agents' states and learner's iterates are evolving simultaneously, highlighting the coupled nature in the analysis of the state-dependent SA algorithm. 

In cases such as $U_{\sf lg}(\cdot)$ where the ideal best response $\argmax_{x'} U(x'; z, \theta)$ must be obtained via an iterative algorithm. From an algorithmic standpoint, the stateful nature for the agent is necessary for the performative prediction algorithm to converge to $\theta_{PS}$. 

\paragraph{Additional Experiments} Next, we provide additional experiments to illustrate the performance of the state-dependent SA algorithm from a few additional perspectives. Unless otherwise specified, we adopt the same parameters set in the experiments presented in the main paper. 
In particular, we set $\beta = 1000/m$ in \eqref{eq:obj_cred}, $\epsilon = 0.01$ in the utility functions, and  in \eqref{eq:ibr}, we set number of selected agents $|{\cal I}_k| = 5$, agents' response rate $\alpha = 0.5 \epsilon$. 
The step size for \eqref{eq:sa} is $\gamma_k = {c_0} / {(c_1+k)}$, $c_0 = {100} / {\tilde{\mu}}, c_1 = { 8 L^2 } / {\tilde{\mu}^2}$, where $L, \tilde{\mu}$ are estimated as $\sqrt{2 \beta m + \| X \|_F^2 / 2}$, $(1-\epsilon) \beta - \epsilon \| X \|_F^2 / 4m$, respectively.  By default, the SA algorithm is executed as presented in Algorithm 2 with a batch size of ${\sf batch} = 1$ and the agents perform only ${\sf BR}=1$ best response update per SA update in step 3 of Algorithm~2. 

Besides, we compare the convergence rates of the algorithms from the perspective of the \emph{agents} -- measured by the number of BR updates performed by the agents. This is the setting used in the plot of Fig.~\ref{fig:main_sim} (right) and is denoted with the $x$-axis label of `{\sf no.~of agent update}'. We also compare the convergence from the perspective of the \emph{learner} -- measured by the number of samples requested from the agents by the learner. This setting is denoted with the $x$-axis label of `{\sf no.~of samples drawn by learner}'.

 \begin{figure}[h]
     \centering
     \includegraphics[width=.45\linewidth]{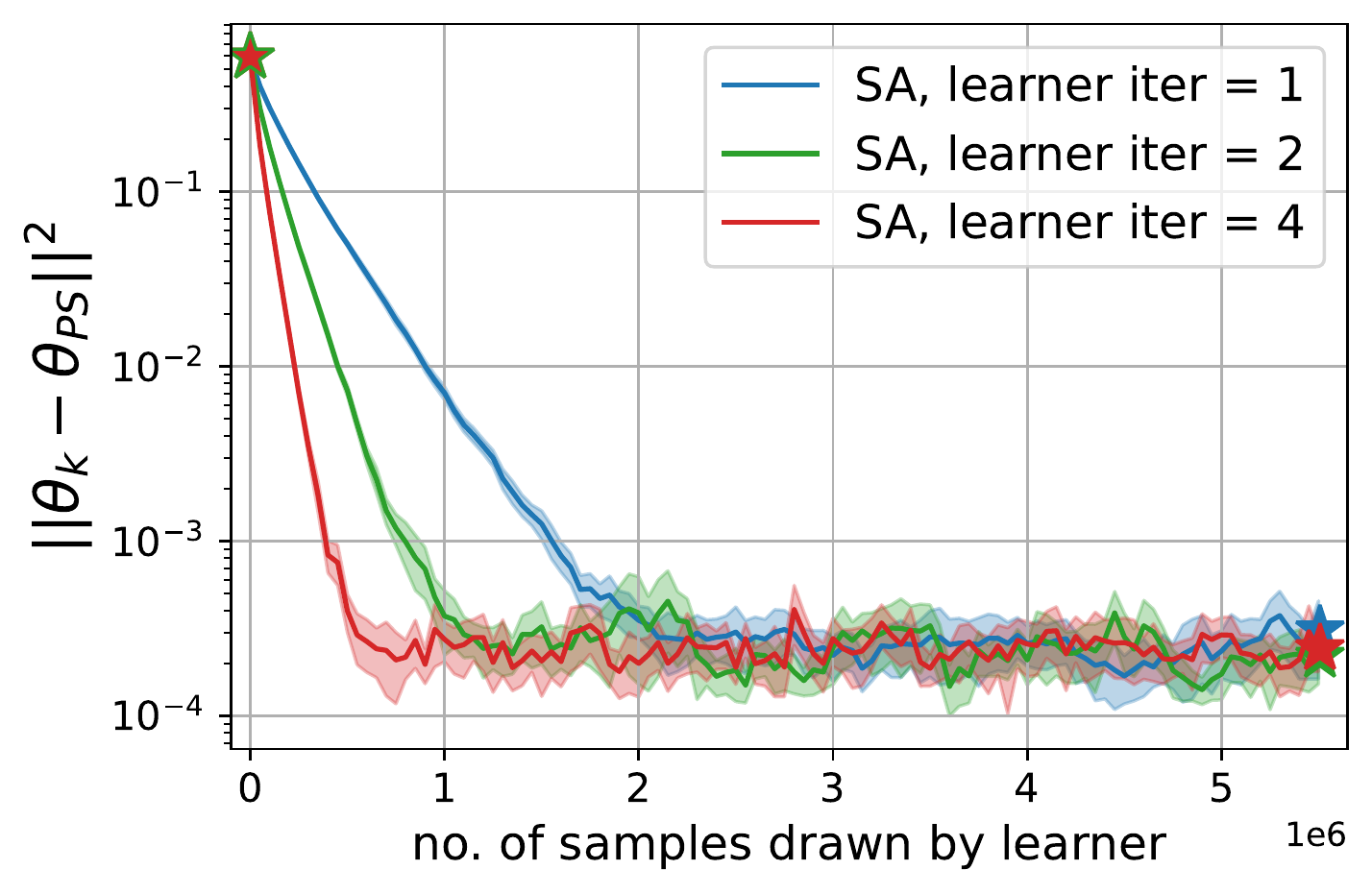}~\includegraphics[width=.45\linewidth]{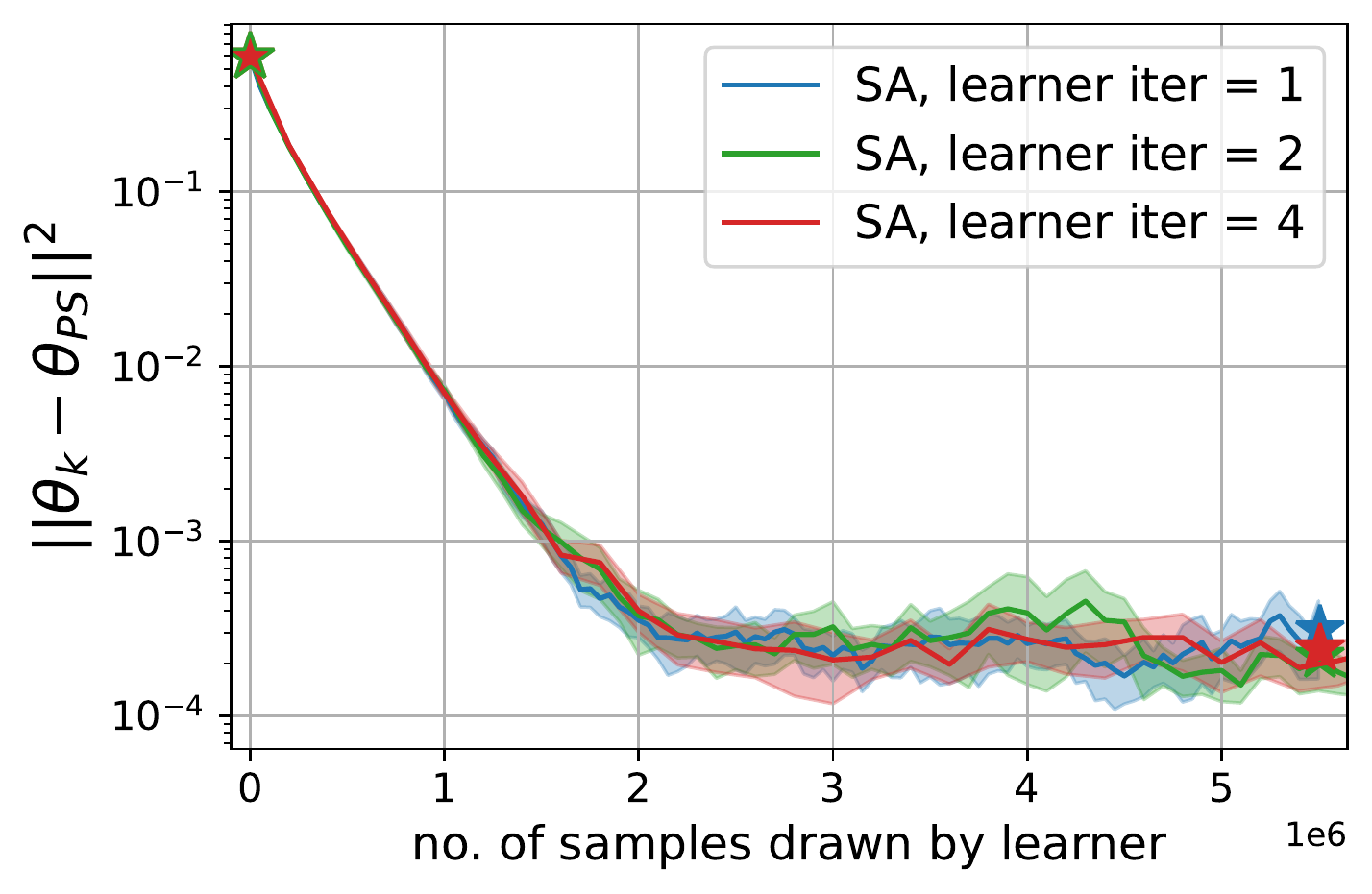}
     \caption{Convergence of SA algorithm with varying number of learner's updates per iteration.}
     \label{fig:lazy comparision}\vspace{-.2cm}
 \end{figure}
 
\paragraph{Effects of Stateful Updates at Agents} Notice that the comparison has been made in Fig.~\ref{fig:main_sim} (right). Here, we again plot the convergence of the SA algorithm 
to illustrate the convergence rates from the learner's perspective as well. We observe that the SA algorithms with stateful update converges as $k$ increases.
We vary the `learner's iteration' parameter to observe the effects on convergence when the learner is adapting at faster rate than the agents. This is achieved by repeating steps 2 and 3 in Algorithm 2 for multiple times. Notice that this setting is similar to the lazy deploy scheme in \citep{mendler2020stochastic}. From the figure, we observe that doing so improves the convergence from the agents' perspective, while the sample efficiency (from the learner's perspective) is unaffected. 

 \begin{figure}[h]
     \centering
     \includegraphics[width=.45\linewidth]{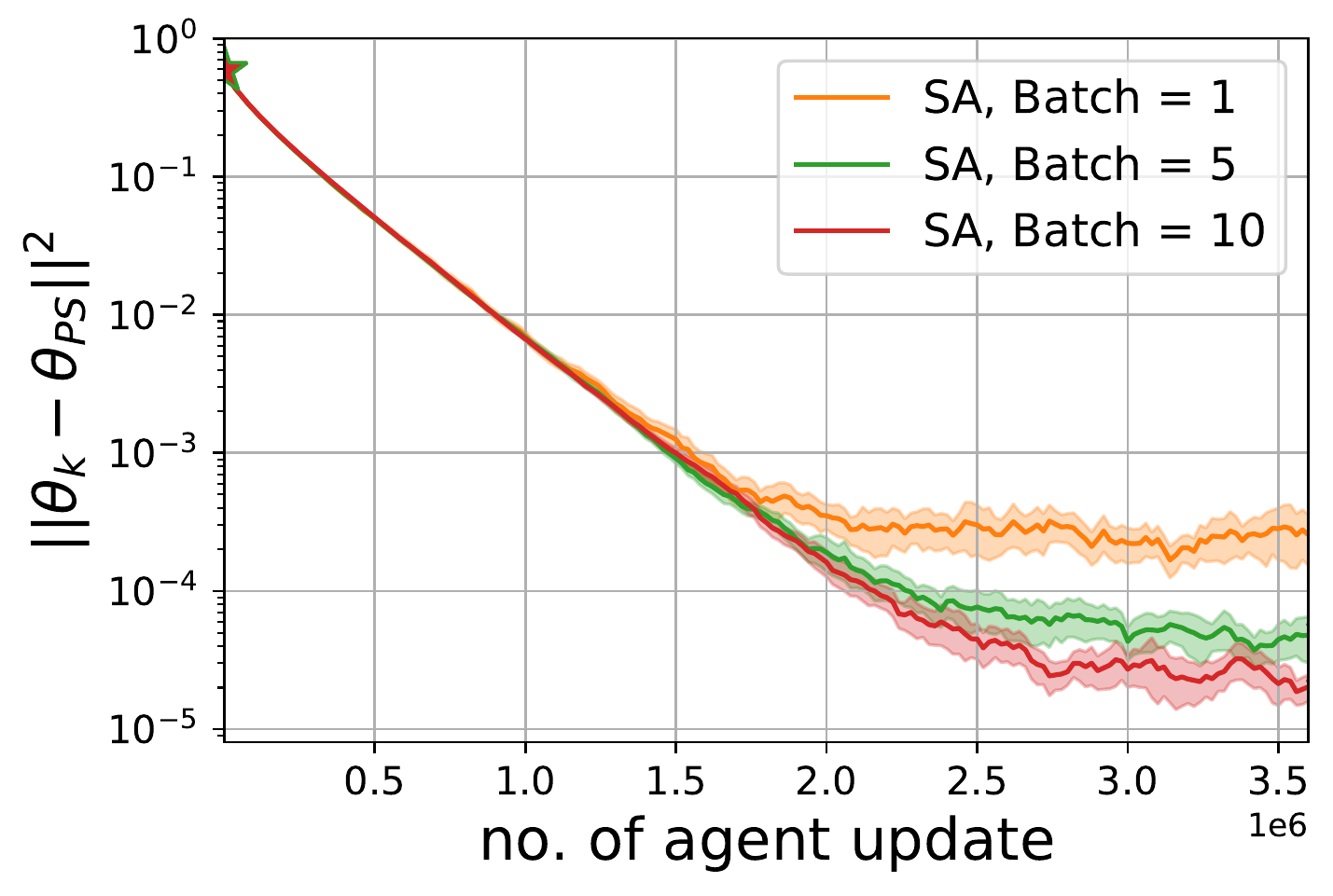}~\includegraphics[width=.45\linewidth]{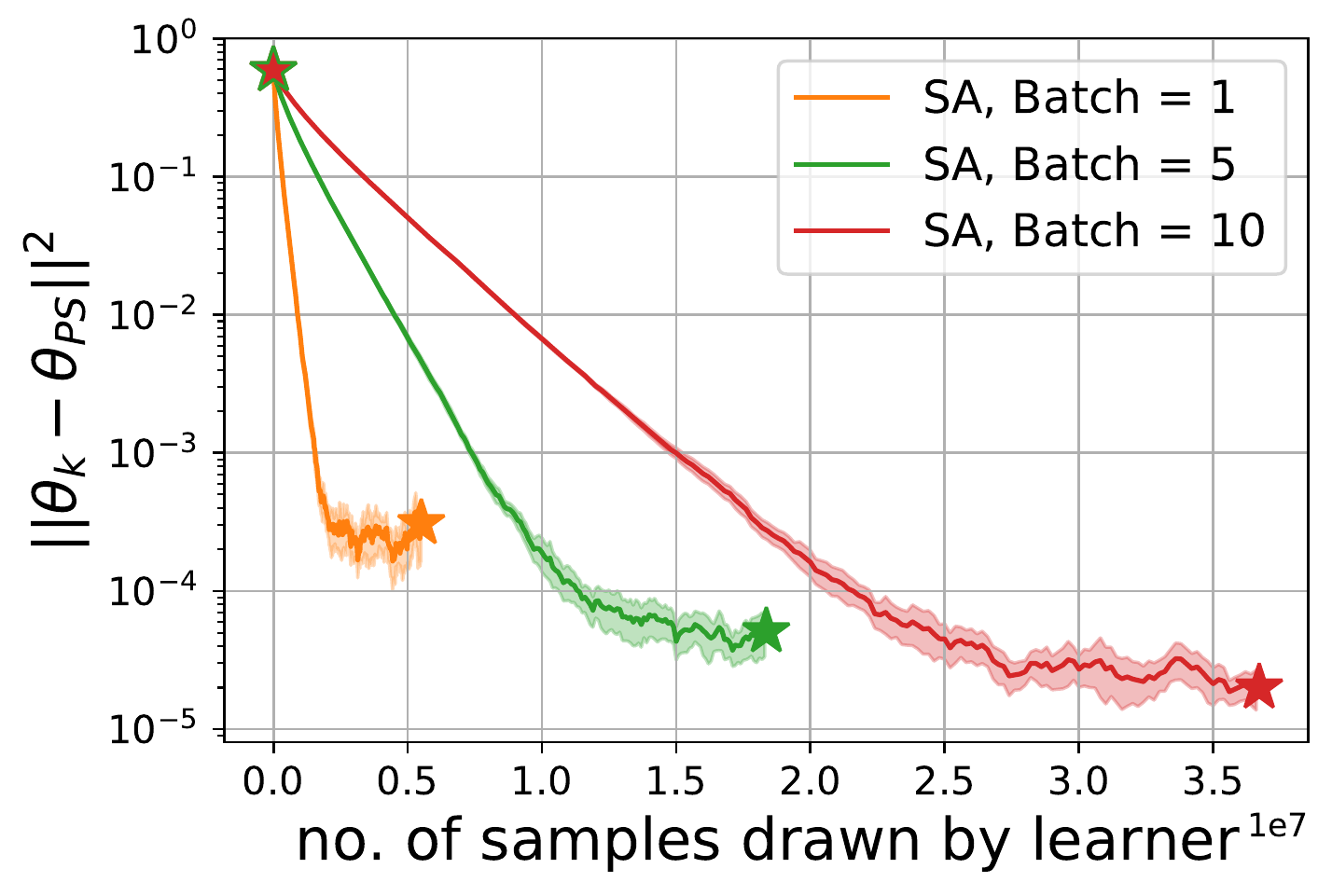}
     \caption{Convergence of SA algorithm with varying number of minibatch size.}\vspace{-.2cm}
     \label{fig:batch size comparision}
 \end{figure}
 
\paragraph{Effects of Minibatch Size} In this experiment, we consider the setting of $U_{\sf lg}(\cdot)$ and we draw different batch size of samples ($\hat{\sf b} \in \{1,5,10\}$) per iteration. To implement this, at step 2 of Algorithm 2, the learner draws $\hat{\sf b}$ agents uniformly as $\hat{\cal I}_k$, and at step 3, we update the iterate through:
\[
\theta_{k+1} = \theta_k - \gamma_{k+1} \frac{1}{\hat{\sf b}} \sum_{ j \in \hat{\cal I}_k } \grd \ell( \theta_k ; z_{k+1, j}).
\]
In Fig.~\ref{fig:batch size comparision}, we compare the error $\| \theta_k - \theta_{PS} \|^2$ in terms of the number of agents' best response update\footnote{Since the agents only perform one best response update per iteration, the $x$-axis here is equivalent to the iteration number $k$.} and the number of samples drawn by the \emph{learner}. 
We find that increasing the minibatch reduces the variance of the gradient estimate, yet it can be less sample efficient from the learner's perspective. 

  \begin{figure}[h]
     \centering
     \includegraphics[width=.45\linewidth]{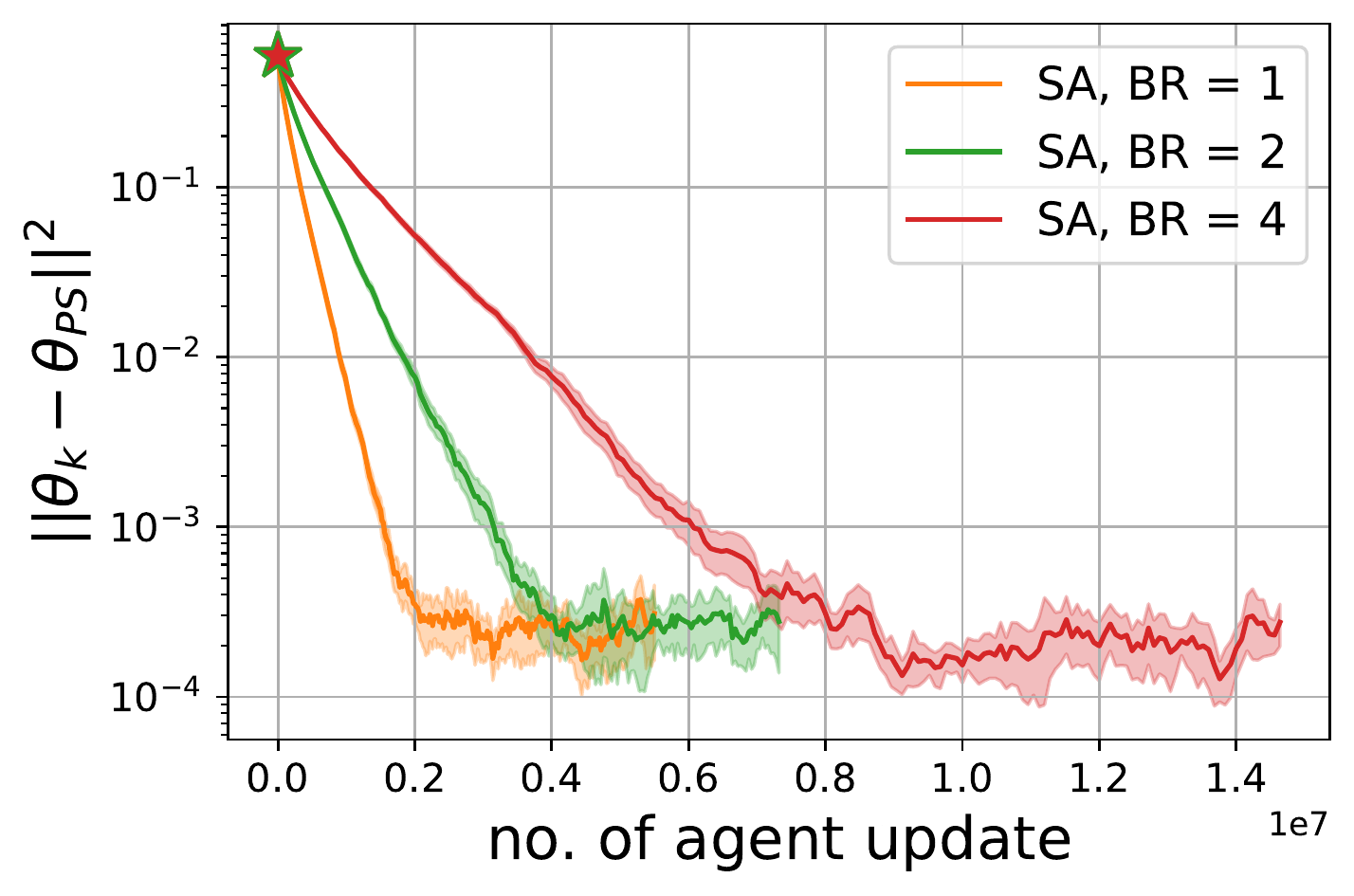}~\includegraphics[width=.45\linewidth]{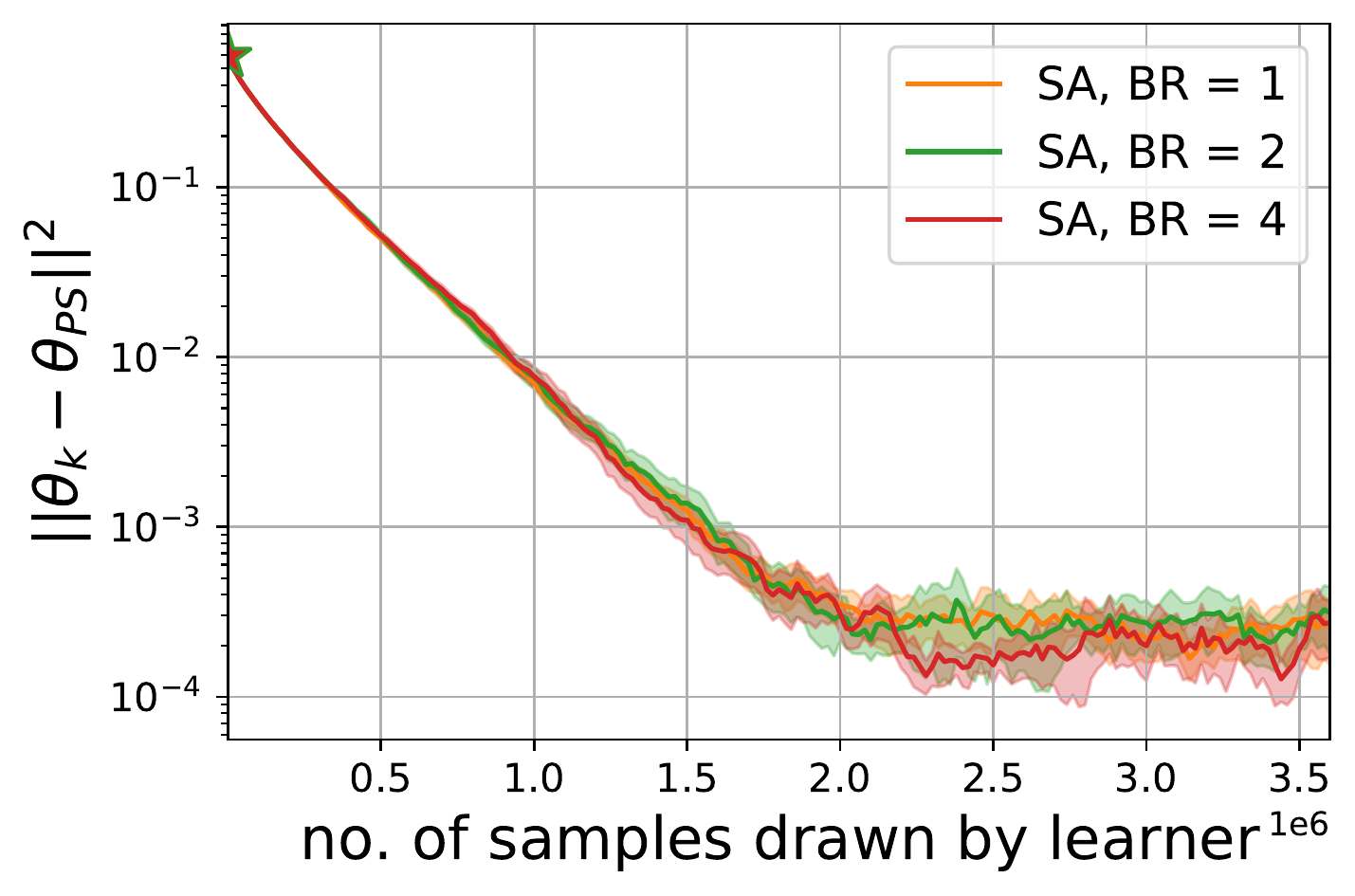}
     \caption{Convergence of SA algorithm with varying number of best responses.}\vspace{-.2cm}
     \label{fig:br comparison}
 \end{figure}
 
\paragraph{Effects of Number of Adaptive Best Responses} In this experiment, we consider the setting of $U_{\sf lg}(\cdot)$ and at each iteration, the agents execute multiple rounds of adapted best response (${\sf BR} \in \{1,2,4\}$) to simulate the scenario when the agents are allowed with more time to respond to the published classifier $\theta_k$. To implement this, we repeat the update in \eqref{eq:abr} of step 1 in Algorithm~2 for ${\sf BR}$ times.
Notice that this is reverse of Fig.~\ref{fig:main_sim} (right) where the learner performs multiple iterations per agents' best response update. 

In Fig.~\ref{fig:br comparison}, we compare the error $\| \theta_k - \theta_{PS} \|^2$ in terms of the number of agent update and the number of samples drawn by the \emph{learner}. We observe that increasing the number of best responses improves the performance slightly. However, as a drawback, it requires more computations/updates at the agents to reach the same performance.

\end{document}